\DeclareMathOperator{\C}{\mathcal{C}}
\DeclareMathOperator{\NNN}{N}
\newtheorem{theorem}{Theorem}[section]
\newtheorem{lemma}[theorem]{Lemma}
\newtheorem{corollary}[theorem]{Corollary}
\newtheorem{proposition}[theorem]{Proposition}
\newtheorem{example}[theorem]{Example}
\newtheorem{conjecture}[theorem]{Conjecture}
\newtheorem{remark}[theorem]{Remark}
\newcommand{\Fq}{\mathbb F_q}
\newcommand{\Fqn}{\mathbb F_{q^n}}
\newcommand{\cG}{{\mathcal G}}
\newcommand{\p}{{\mathrm p}}
\newcommand{\N}{{\mathrm N}}
\newcommand{\cH}{{\mathcal H}}
\newcommand{\cC}{{\mathcal C}}
\newcommand{\cL}{{\mathcal L}}
\newcommand{\F}{{\mathbb F}}
\newcommand{\Tr}{\hbox{{\rm Tr}}}
\newcommand{\la}{\langle}
\newcommand{\ra}{\rangle}
\newcommand{\PG}{\mathrm{PG}}
\title{Vertex properties of maximum scattered linear sets of $\PG(1,q^n)$}
\author{Corrado Zanella and Ferdinando Zullo \thanks{
The
research  was supported by
 the Italian National
Group for Algebraic and Geometric Structures and their Applications (GNSAGA
- INdAM). }}
\date{}
\begin{document}
\maketitle

\begin{abstract}
In this paper we investigate the geometric properties of the configuration consisting
of a subspace $\Gamma$ and a canonical subgeometry $\Sigma$ in $\PG(n-1,q^n)$,
with $\Gamma\cap\Sigma=\emptyset$.
The idea motivating is that such properties are reflected in the algebraic structure
of the linear set which is projection of $\Sigma$ from the vertex $\Gamma$.
In particular
we deal with the maximum scattered linear sets of the line $\PG(1,q^n)$ found by Lunardon and Polverino in \cite{LP2001} and recently generalized by Sheekey in \cite{Sh}.
Our aim is to characterize this family by means of the properties of the vertex of the projection as done by Csajb\'ok and the first author of this paper for linear sets of pseudoregulus type.
With reference to such properties, we construct new examples of scattered linear sets in $\PG(1,q^6)$, yielding also to new examples of MRD-codes in $\F_q^{6\times 6}$ with left idealiser isomorphic to $\F_{q^6}$.
\end{abstract}

\bigskip
{\it AMS subject classification:} 51E20, 05B25, 51E22

\bigskip
{\it Keywords:} Linear set, linearized polynomial, $q$-polynomial, finite projective line, subgeometry, scattered linear set

\section{Introduction}

Let $\Lambda=\PG(W,\F_{q^n})=\PG(1,q^n)$, where $W$ is a vector space of dimension $2$ over $\F_{q^n}$.
A point set $L$ of $\Lambda$ is said to be an \emph{$\F_q$-linear set} of $\Lambda$ of rank
$\rho$ if it is
defined by the non-zero vectors of a $\rho$-dimensional $\F_q$-vector subspace $U$ of $W$, i.e.
\[L=L_U=\{\la {\bf u} \ra_{\mathbb{F}_{q^n}} \colon {\bf u}\in U\setminus \{{\bf 0} \}\}.\]
Two linear sets $L_U$ and $L_W$ of $\PG(1,q^n)$ are said to be \emph{$\mathrm{P\Gamma L}$-equivalent} if there is an element $\phi$ in $\mathrm{P\Gamma L}(2,q^n)$ such that $L_U^{\phi} = L_W$.
It may happen that two $\F_q$--linear sets $L_U$ and $L_W$ of $\PG(1,q^n)$ are $\mathrm{P\Gamma L}$-equivalent even if the $\F_q$-vector subspaces $U$ and $W$ are not in the same orbit of $\Gamma \mathrm{L}(2,q^n)$ (see \cite{CSZ2015} and \cite{CMP} for further details).

Lunardon and Polverino in \cite{LuPo2004} (see also \cite{LuPoPo2002}) show that every linear
set is a projection of a canonical subgeometry, where a \emph{canonical subgeometry} in $\PG(m-1,q^n)$ is a linear set $L$ of rank $m$ such that $\la L \ra=\PG(m-1,q^n)$ (\footnote{Angle brackets without the indication of a field will denote the projective span of a set of points in a projective space.}).
In particular, by \cite[Theorems 1 and 2]{LuPo2004} (adapted to the projective line case), for each $\F_q$-linear set $L_U$ of the projective line $\Lambda=\PG(1,q^n)$ of rank $n$ there exist a canonical subgeometry $\Sigma\cong\PG(n-1,q)$ of $\Sigma^*=\PG(n-1,q^n)$,
and an $(n-3)$-subspace $\Gamma$ of $\Sigma^*$ disjoint from $\Sigma$ and from $\Lambda$ such that
\[ L_U=\p_{\Gamma,\Lambda}(\Sigma)=\{\langle \Gamma,P\rangle \cap \Lambda \colon P \in \Sigma\}. \]
We call $\Gamma$ and $\Lambda$ the \emph{vertex} (or \emph{center}) and the \emph{axis} of the projection, respectively.

In this paper we focus on {\it maximum scattered} $\F_q$-linear sets of $\PG(1,q^n)$, that is,
$\F_q$-linear sets of rank $n$ in $\PG(1,q^n)$ of size $(q^n-1)/(q-1)$.
In this case, we also say that the related $\F_q$-subspace is \emph{maximum scattered}.
Recall that the \emph{weight of a point} $P=\langle \mathbf{u} \rangle_{\F_{q^n}}$ is $w_{L_U}(P)=\dim_{\F_q}(U\cap\langle \mathbf{u} \rangle_{\F_{q^n}})$. A linear set $L_U$ is scattered if and only if each of its points has weight one.

If $\la (0,1) \ra_{\F_{q^n}}$ is not contained in the linear set $L_U$ of rank $n$ of $\PG(1,q^n)$ (which we can always assume after a suitable projectivity), then $U=U_f:=\{(x,f(x))\colon x\in \F_{q^n}\}$ for some linearized polynomial (or \textit{$q$-polynomial}) $f(x)=\sum_{i=0}^{n-1}a_ix^{q^i}\in \F_{q^n}[x]$. In this case we will denote the associated linear set by $L_f$.

The first example of maximum scattered $\F_q$-linear set, found by Blokhuis and Lavrauw in  \cite{BL2000}, is known as linear sets of \emph{pseudoregulus type} and
 can be defined (see \cite[Section 4]{LuMaPoTr2014}) as any linear set
 $\mathrm{P\Gamma L}$-equivalent to
\[ L^1=\{\la (x,x^q) \ra_{\mathbb{F}_{q^n}} \colon x \in \F_{q^n}^*\}. \]

A characterization of the linear sets of pseudoregulus type has been given by Csajb\'ok and Zanella in \cite{CsZ20162} as particular projections of a canonical subgeometry (see Theorem \ref{chPseudo}).

\begin{theorem}\cite[Theorem 2.3]{CsZ20162}\label{chPseudo}
Let $\Sigma$ be a canonical subgeometry of $\PG(n-1,q^n)$, $q>2$, $n \geq 3$.
Assume that $\Gamma$ and $\Lambda$ are an $(n-3)$-subspace and a line of $\PG(n-1,q^n)$, respectively, such that $\Sigma \cap \Gamma=\Lambda \cap \Gamma= \emptyset$. Then the following assertions are equivalent:
\begin{enumerate}
  \item The set $\p_{\Gamma, \Lambda}(\Sigma)$ is a scattered $\F_q$-linear set of pseudoregulus type;
  \item A generator $\hat{\sigma}$ exists of the subgroup of $\mathrm{P}\Gamma\mathrm{L}(n,q^n)$ fixing $\Sigma$ pointwise, such that $\dim(\Gamma\cap\Gamma^{\hat{\sigma}})=n-4$; furthermore $\Gamma$ is not contained in the span of any hyperplane of $\Sigma$;
  \item There exists a point $P_\Gamma$ and a generator $\hat{\sigma}$ of the subgroup of $\mathrm{P}\Gamma\mathrm{L}(n,q^n)$ fixing $\Sigma$ pointwise, such that $\la P_\Gamma,P_\Gamma^{\hat{\sigma}},\ldots, P_\Gamma^{\hat{\sigma}^{n-1}} \ra=\PG(n-1,q^n)$, and
      \[ \Gamma=\la P_\Gamma,P_\Gamma^{\hat{\sigma}},\ldots, P_\Gamma^{\hat{\sigma}^{n-3}} \ra. \]
\end{enumerate}
\end{theorem}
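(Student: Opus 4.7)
The strategy is to fix coordinates on $\PG(n-1,q^n)$ so that $\Sigma$ is the canonical subgeometry of $\F_q$-rational points, with the Frobenius $\sigma\colon\langle(x_1,\ldots,x_n)\rangle\mapsto\langle(x_1^q,\ldots,x_n^q)\rangle$ generating the cyclic pointwise stabilizer of $\Sigma$ in $\mathrm{P\Gamma L}(n,q^n)$. The generators of this order-$n$ group are exactly the collineations $\hat\sigma=\sigma^k$ with $\gcd(k,n)=1$. I plan to establish $(3)\Leftrightarrow(2)$ and $(3)\Leftrightarrow(1)$ separately.

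For $(3)\Rightarrow(2)$, the expressions $\Gamma=\langle P_\Gamma,P_\Gamma^{\hat\sigma},\ldots,P_\Gamma^{\hat\sigma^{n-3}}\rangle$ and $\Gamma^{\hat\sigma}=\langle P_\Gamma^{\hat\sigma},\ldots,P_\Gamma^{\hat\sigma^{n-2}}\rangle$ share $n-3$ projective generators, giving $\dim(\Gamma\cap\Gamma^{\hat\sigma})\geq n-4$, with equality forced by the hypothesis that the full orbit spans $\PG(n-1,q^n)$. Moreover, any $\Sigma$-hyperplane is stabilised setwise by $\hat\sigma$, so if it contained $\Gamma$ it would contain every $\hat\sigma^i$-translate of $\Gamma$ and hence the full orbit of $P_\Gamma$, contradicting (3). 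For $(2)\Rightarrow(3)$, the dimension condition forces $\Gamma\cap\Gamma^{\hat\sigma}$ to be a hyperplane of $\Gamma$, and applying $\hat\sigma^j$ yields the analogous hyperplane chain for every consecutive pair $\Gamma^{\hat\sigma^j},\Gamma^{\hat\sigma^{j+1}}$. Picking any $P_\Gamma\in\Gamma\setminus\Gamma^{\hat\sigma}$, an induction on $j$ shows $P_\Gamma^{\hat\sigma^j}\in\Gamma^{\hat\sigma^j}\setminus\Gamma^{\hat\sigma^{j-1}}$ for $1\leq j\leq n-3$; the resulting $n-2$ points are therefore projectively independent, and their span, contained in $\Gamma$ by dimension, equals $\Gamma$. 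The ``no $\Sigma$-hyperplane contains $\Gamma$'' hypothesis then upgrades this to $\langle P_\Gamma,\ldots,P_\Gamma^{\hat\sigma^{n-1}}\rangle=\PG(n-1,q^n)$, since the $\hat\sigma$-invariant span of the orbit is defined over $\F_q$ and, if proper, would be contained in such a hyperplane.

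For $(3)\Rightarrow(1)$, write $P_\Gamma=\langle(a_1,\ldots,a_n)\rangle$; the spanning condition on the orbit is equivalent to non-singularity of the matrix $[a_j^{q^{ki}}]$, i.e.\ (using $\gcd(k,n)=1$) to $a_1,\ldots,a_n$ being $\F_q$-linearly independent. Choosing an axis $\Lambda$ complementary to $\Gamma$ and computing projective coordinates for the projection of a generic $\F_q$-rational point, I would identify $\p_{\Gamma,\Lambda}(\Sigma)$ with $\{\langle(x,x^{q^k})\rangle\colon x\in\F_{q^n}^{*}\}$, which is $\mathrm{P\Gamma L}$-equivalent to $L^1$ and hence of pseudoregulus type. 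For $(1)\Rightarrow(3)$, one can exhibit one concrete realisation: the embedding $x\mapsto(x,x^q,\ldots,x^{q^{n-1}})$ of $\F_{q^n}$ into $\F_{q^n}^{n}$ maps $\F_q$-rational scalars to $\Sigma$, while the kernel of the first two coordinates is an $(n-3)$-subspace of the orbit form required in (3). Any other pseudoregulus realisation is obtained from this one by a collineation of $\PG(1,q^n)$ which, by the lifting construction of Lunardon--Polverino recalled in the introduction, can be realised by an element of $\mathrm{P\Gamma L}(n,q^n)$ preserving $\Sigma$, so the orbit structure of $\Gamma$ transfers.

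The main technical obstacle I expect is the implication $(2)\Rightarrow(3)$. The hyperplane-chain argument is natural, but making precise that a \emph{generic} $P_\Gamma\in\Gamma\setminus\Gamma^{\hat\sigma}$ realises the required orbit structure, and isolating the exact role of the ``not in any $\Sigma$-hyperplane'' hypothesis in promoting $\Gamma$-orbit spanning to full-space spanning, requires careful dimension bookkeeping on the nested intersections $\bigcap_{i=0}^{j}\Gamma^{\hat\sigma^i}$. I also expect the hypothesis $q>2$ to enter precisely here, to guarantee the existence of a point $P_\Gamma\in\Gamma\setminus\Gamma^{\hat\sigma}$ avoiding the finitely many bad hyperplanes that would spoil the inductive step.
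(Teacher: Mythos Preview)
Your overall plan is reasonable, but the sketch for $(2)\Rightarrow(3)$ contains a genuine gap. You write ``Picking any $P_\Gamma\in\Gamma\setminus\Gamma^{\hat\sigma}$'' and then claim that $P_\Gamma,P_\Gamma^{\hat\sigma},\ldots,P_\Gamma^{\hat\sigma^{n-3}}$ span $\Gamma$. But for a generic such $P_\Gamma$ there is no reason that $P_\Gamma^{\hat\sigma}$ lies in $\Gamma$ at all: applying $\hat\sigma$ only gives $P_\Gamma^{\hat\sigma}\in\Gamma^{\hat\sigma}$, and $\Gamma\cap\Gamma^{\hat\sigma}$ is a proper hyperplane of $\Gamma^{\hat\sigma}$. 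So the assertion ``their span, contained in $\Gamma$'' fails from the outset. In fact the point $P_\Gamma$ realising $(3)$ is \emph{unique}, not generic, and is characterised by $P_\Gamma\in\bigcap_{i=0}^{n-3}\Gamma^{\hat\sigma^{-i}}\setminus\Gamma^{\hat\sigma}$. The clean way to produce it is to set $\Omega=\Gamma\cap\Gamma^{\hat\sigma}$, check that $\dim(\Omega\cap\Omega^{\hat\sigma})=\dim\Omega-1$ (using that $\Omega=\Omega^{\hat\sigma}$ would force $\Omega\cap\Sigma\neq\emptyset$), apply induction on the dimension to get $\Omega=\langle P',\ldots,P'^{\hat\sigma^{n-4}}\rangle$, and then take $P_\Gamma=P'^{\hat\sigma^{-1}}$. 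This is exactly the content of the paper's Theorem~\ref{k-1}. Your speculation that $q>2$ enters to ``avoid bad hyperplanes'' in this step is off the mark; no counting over $\F_q$ is needed here.

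It is worth noting that the paper's own re-proof of Theorem~\ref{chPseudo} takes a route entirely different from yours. Rather than passing through condition~$(3)$, it proves $(1)\Leftrightarrow(2)$ directly via the MRD-code dictionary: the vertex $\Gamma$ is identified (through $V=c_{\mathcal N}(\mathcal C^\perp)$) with an $(n-2)$-dimensional $\F_{q^n}$-linear code $\mathcal C$, the condition $\dim(\Gamma\cap\Gamma^{\sigma^s})=n-4$ becomes $\dim_{\F_{q^n}}(\mathcal C\cap\mathcal C^{[s]})=n-3$, and one invokes the Horlemann-Trautmann--Marshall characterisation of generalized Gabidulin codes (Theorem~\ref{gabidulind}). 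The simplicity of the pseudoregulus linear set (from \cite{CSZ2015}) handles the passage from ``some vertex'' to ``every vertex'' in the direction $(1)\Rightarrow(2)$. Your direct geometric approach is closer in spirit to the original proof in \cite{CsZ20162}; it avoids the coding-theory machinery but requires the inductive construction of $P_\Gamma$ described above, which your current sketch does not supply.
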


Few other families of maximum scattered linear sets of $\PG(1,q^n)$ are known, see \cite{CMPZ,CsMZ2018}.
We will deal with the only remaining family of maximum scattered linear sets existing for each value of $n$.
Such a family has been introduced by Lunardon and Polverino in \cite{LP2001} for $s=1$ and generalized by Sheekey in \cite{Sh} and is defined as follows
\begin{equation}\label{LPform}
L_{s,\delta}^n=\{\la(x,\delta x^{q^s} + x^{q^{n-s}})\ra_{\F_{q^n}}\colon x\in \F_{q^n}^*\},
\end{equation}
with $n\geq 4$, $\N_{q^n/q}(\delta)\notin \{0,1\}$
(\footnote{This condition  implies $q\neq 2$.}) and $(s,n)=1$.
More generally, we will call each linear set equivalent to a maximum scattered linear set of the form \eqref{LPform}, with $\delta \neq 0$, of \emph{Lunardon-Polverino type} (or shortly \emph{LP-type}).
For some values of $s$, $\delta$ and $n$, $\N_{q^n/q}(\delta)\notin \{0,1\}$ is a necessary condition for $L_{s,\delta}^n$ to be scattered, see Section \ref{Geo}.
Up to our knowledge, no scattered $L_{s,\delta}^n$ is known satisfying $\N(\delta)=1$.

Our aim is to prove characterizations of maximum scattered linear sets of LP-type in the spirit of the characterization of the linear sets of pseudoregulus type, cf.\ Theorem \ref{chPseudo}.
As a consequence, we will construct new examples of maximum scattered linear sets in $\PG(1,q^6)$.
As showed in \cite[Sect.\ 5]{Sh},
this also yields to new examples of MRD-codes in $\F_q^{6\times 6}$ with left idealiser isomorphic to $\F_{q^6}$ \cite[Proposition 6.1]{CMPZ}
 (see also \cite{CSMPZ2016,CsMPZ2019,ShVdV}), see last section for more details on the connections with MRD-codes.

We will work in the following framework.
Let $x_0,\ldots,x_{n-1}$ be the homogeneous coordinates of $\PG(n-1,q^n)$ and let
\[ \Sigma=\{\la (x,x^{q},\ldots,x^{q^{n-1}}) \ra_{\F_{q^n}} \colon x \in\F_{q^n} \} \]
be a fixed canonical subgeometry of $\PG(n-1,q^n)$.
The collineation $\hat{\sigma}$ of $\PG(n-1,q^n)$ defined by
$\la(x_0,\ldots,x_{n-1})\ra_{\F_{q^n}}^{\hat{\sigma}}=\la(x_{n-1}^{q},x_0^{q},\ldots,x_{n-2}^{q})\ra_{\F_{q^n}}$ fixes precisely the points of $\Sigma$.
Note that if $\sigma$ is a collineation of $\PG(n-1,q^n)$ such that $\mathrm{Fix}(\sigma)=\Sigma$, then $\sigma=\hat{\sigma}^s$, with $(s,n)=1$.

\section{Possible configurations of the vertex of the projection}

Following \cite[Section 3]{GiuZ}, we are able to
describe the structure of the vertex of the projection, under certain assumptions regarding the dimension of the intersections with some of its conjugates w.r.t. a collineation of $\PG(n-1,q^n)$ fixing the chosen subgeometry pointwise.

We start by recalling the following lemma.

\begin{lemma}\cite[Lemma 3]{Lun99}\label{int}
If $S$ is a nonempty projective subspace of dimension $k$ of $\PG(n-1,q^n)$ fixed by $\sigma$, then
$S$ meets $\Sigma$ in an $\F_q$-subspace of dimension $k$.
In particular, $S \cap \Sigma \neq \emptyset$.
\end{lemma}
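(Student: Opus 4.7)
The plan is to lift everything from the projective to the vector level and invoke a standard Galois-descent (``Hilbert~90 for vector spaces'') argument. Since $\sigma$ is a collineation of $\PG(n-1,q^n)$ whose fixed pointset equals $\Sigma$, the remark preceding the lemma gives $\sigma=\hat\sigma^s$ with $(s,n)=1$; I lift $\hat\sigma$ to the semilinear bijection
\[
\tau\colon(x_0,\ldots,x_{n-1})\longmapsto(x_{n-1}^q,x_0^q,\ldots,x_{n-2}^q)
\]
on $V_0:=\Fqn^n$, whose companion automorphism is the $q$-Frobenius, and set $\bar\tau:=\tau^s$. Then $\bar\tau$ lifts $\sigma$, has companion $x\mapsto x^{q^s}$ (a generator of $\mathrm{Gal}(\Fqn/\Fq)$ since $(s,n)=1$), and satisfies $\bar\tau^n=\mathrm{id}$.

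Next I transfer the hypothesis to the vector level. Let $V\subseteq V_0$ be the $(k+1)$-dimensional $\Fqn$-subspace with $S=\PG(V,\Fqn)$; the assumption $\sigma(S)=S$ forces $\bar\tau(V)=V$, so $\bar\tau|_V$ is an $\Fqn$-semilinear automorphism of $V$ whose companion generates $\mathrm{Gal}(\Fqn/\Fq)$ and whose order divides~$n$. The key step is to apply the classical descent statement for such operators on a cyclic Galois extension: the fixed set
\[
V^{\bar\tau}:=\{v\in V : \bar\tau v=v\}
\]
is an $\Fq$-subspace of $V$ with $\dim_{\Fq}V^{\bar\tau}=k+1$, and the natural map $V^{\bar\tau}\otimes_{\Fq}\Fqn\to V$ is an isomorphism.

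Finally I match this descent datum with the geometry. Let $U':=V_0^{\bar\tau}$, an $n$-dimensional $\Fq$-subspace of $V_0$. A brief Hilbert~90 computation on the $1$-cocycle $\mu$ defined by $\bar\tau v=\mu v$ (the identity $\bar\tau^n=\mathrm{id}$ forces $\N_{\Fqn/\Fq}(\mu)=1$, permitting rescaling into $U'$) identifies $\Sigma=\{\la v\ra_{\Fqn}:v\in U'\setminus\{0\}\}$. Since trivially $V^{\bar\tau}=V\cap U'$, the nonzero vectors of $V^{\bar\tau}$ parametrize exactly $S\cap\Sigma$: a vector $v\in V^{\bar\tau}\setminus\{0\}$ sits in $V$ and in $U'$, giving a point of $S\cap\Sigma$; conversely any $\la w\ra_{\Fqn}\in S\cap\Sigma$ admits a representative $w\in U'$, and the inclusion $\la w\ra_{\Fqn}\subseteq V$ together with the $\Fqn$-linearity of $V$ forces $w\in V\cap U'=V^{\bar\tau}$. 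Thus $S\cap\Sigma$ is the $k$-dimensional projective $\Fq$-subspace of $\Sigma$ corresponding to $V^{\bar\tau}$, and in particular is nonempty.

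I expect the main obstacle of a first-principles write-up to be precisely the descent step yielding $\dim_{\Fq}V^{\bar\tau}=k+1$; the subsequent matching of $\Fq$-data in $V_0$ with points of the canonical subgeometry is routine bookkeeping. This descent statement is exactly the content the authors invoke from \cite[Lemma 3]{Lun99}.
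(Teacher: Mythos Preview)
Your argument is correct. The paper does not actually supply a proof of this lemma: it is quoted verbatim from \cite[Lemma~3]{Lun99} and used as a black box. So there is no ``paper's own proof'' to compare against, and your write-up stands as a self-contained justification.

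The route you take --- lift $\sigma$ to the semilinear operator $\bar\tau$ on $V_0=\Fqn^n$, observe $\bar\tau(V)=V$, and then invoke Galois descent (Speiser/Hilbert~90 for $\GL$) to conclude $\dim_{\Fq}V^{\bar\tau}=\dim_{\Fqn}V=k+1$ --- is exactly the standard one, and is almost certainly what Lunardon does in \cite{Lun99}. The identification $V_0^{\bar\tau}=V_0^{\tau}=\{(x,x^q,\ldots,x^{q^{n-1}}):x\in\Fqn\}$ is immediate (both are $n$-dimensional over $\Fq$ and one contains the other), and your Hilbert~90 rescaling step correctly shows that every point of $\Sigma$ has a representative in $U'$. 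The only cosmetic remark is that the sentence about ``the $1$-cocycle $\mu$ defined by $\bar\tau v=\mu v$'' reads as if $\mu$ were globally defined on $V_0$, whereas it is attached to a single $\sigma$-fixed projective point $\la v\ra_{\Fqn}$; the computation you sketch ($\bar\tau^n=\mathrm{id}\Rightarrow\N_{q^n/q}(\mu)=1\Rightarrow\mu=\lambda^{1-q^s}$ for some $\lambda$, hence $\lambda v\in U'$) is fine once this is understood.
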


\smallskip

Since the vertex of the projection is disjoint from $\Sigma$, we have that $\dim(\Gamma \cap \Gamma^\sigma)\leq \dim\Gamma-1$.
We characterize the extremal case, i.e. when $\dim(\Gamma \cap \Gamma^\sigma)= \dim \Gamma-1$.

\begin{theorem}\label{k-1}
Let $\Gamma$ be a subspace of $\PG(n-1,q^n)$ of dimension $k$ and
such that $\Gamma \cap \Sigma=\emptyset$.
If $\dim (\Gamma \cap \Gamma^\sigma)=k-1$, then there exists exactly one point $P$
in $\PG(n-1,q^n)$ such that
\[ \Gamma=\la P,P^\sigma,\ldots,P^{\sigma^k} \ra. \]
Furthermore, $P^{\sigma^{n-1}}\not\in\Gamma$.
\end{theorem}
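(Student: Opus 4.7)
My plan is to construct the point $P$ explicitly by intersecting $\Gamma$ with $k$ of its $\sigma$-translates. For $0\le i\le k+1$ define
\[ \Gamma_i = \Gamma \cap \Gamma^{\sigma^{-1}} \cap \cdots \cap \Gamma^{\sigma^{-i}}, \]
and set $P:=\Gamma_k$ once I have shown that $\dim \Gamma_i = k-i$ for $0\le i\le k$, with $\Gamma_{k+1}=\emptyset$.

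The dimension computation proceeds by induction on $i$. The base case $\dim\Gamma_1=k-1$ follows from the hypothesis, since $\sigma^{-1}$ is a collineation sending $\Gamma\cap\Gamma^\sigma$ to $\Gamma^{\sigma^{-1}}\cap\Gamma$. For the inductive step I use the identity $\Gamma_i = \Gamma_{i-1}\cap \Gamma_{i-1}^{\sigma^{-1}}$ and observe that $\Gamma_{i-1}$ and $\Gamma_{i-1}^{\sigma^{-1}}$ both sit as hyperplanes inside $\Gamma_{i-2}^{\sigma^{-1}}$ (of dimension $k-i+2$ by the previous step). Either these two hyperplanes coincide---in which case $\Gamma_{i-1}$ is $\sigma$-invariant, forcing $\Gamma_{i-1}\cap\Sigma\neq\emptyset$ via Lemma \ref{int} and contradicting $\Gamma_{i-1}\subseteq\Gamma$ together with $\Gamma\cap\Sigma=\emptyset$---or they meet in codimension two, yielding $\dim\Gamma_i=k-i$. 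The same $\sigma$-invariance dichotomy applied to the two points $\Gamma_k$ and $\Gamma_k^{\sigma^{-1}}$ shows they are distinct, so $\Gamma_{k+1}=\emptyset$.

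With $P=\Gamma_k$ in hand, the memberships $P\in \Gamma^{\sigma^{-j}}$ for $j=0,\dots,k$ translate to $P^{\sigma^j}\in \Gamma$ for $j=0,\dots,k$. I then show that these $k+1$ points are $\F_{q^n}$-linearly independent: if $r$ were the smallest index with $P^{\sigma^r}$ dependent on $P,P^\sigma,\dots,P^{\sigma^{r-1}}$, the span $\langle P,\dots,P^{\sigma^{r-1}}\rangle$ would be a nonempty projective subspace of $\Gamma$ stable under $\sigma$, again contradicting Lemma \ref{int}. Hence $\langle P,P^\sigma,\dots,P^{\sigma^k}\rangle$ is a $k$-dimensional subspace contained in $\Gamma$, so equals $\Gamma$. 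Uniqueness is immediate: any $P_1$ with $\Gamma=\langle P_1,\dots,P_1^{\sigma^k}\rangle$ must belong to each $\Gamma^{\sigma^{-j}}$ for $0\le j\le k$, hence to $\Gamma_k=\{P\}$. For the ``furthermore'' assertion, if $P^{\sigma^{n-1}}=P^{\sigma^{-1}}\in\Gamma$ then $\Gamma^\sigma$ would contain each of $P,P^\sigma,\dots,P^{\sigma^k}$, and hence all of $\Gamma$; equality of dimensions would force $\Gamma^\sigma=\Gamma$, making $\Gamma$ $\sigma$-invariant and violating $\Gamma\cap\Sigma=\emptyset$.

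The main obstacle is the dimension induction, because the only tool available for forcing the chain $\Gamma_0\supset\Gamma_1\supset\cdots$ to strictly descend is Lemma \ref{int}: one must verify at each step that a putative coincidence $\Gamma_{i-1}=\Gamma_{i-1}^{\sigma^{-1}}$ would manufacture a nonempty $\sigma$-invariant projective subspace inside $\Gamma$, which is incompatible with $\Gamma\cap\Sigma=\emptyset$. Once this dichotomy has been run through the entire chain, the independence claim, the uniqueness, and the final noninclusion of $P^{\sigma^{n-1}}$ all reduce to the same principle.
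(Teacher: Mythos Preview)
Your proof is correct and rests on the same core mechanism as the paper's---iterated intersections of $\Gamma$ with its $\sigma$-translates, with Lemma \ref{int} forcing each step to be strict---but the organization differs. The paper proceeds by induction on $k$: it sets $\Omega=\Gamma\cap\Gamma^\sigma$, checks that $\dim(\Omega\cap\Omega^\sigma)=k-2$, applies the induction hypothesis to $\Omega$ to obtain a point $P'$ with $\Omega=\la P',\ldots,P'^{\sigma^{k-1}}\ra$, and then takes $P=P'^{\sigma^{-1}}$. You instead fix $\Gamma$ and run the descending chain $\Gamma_i=\Gamma\cap\Gamma^{\sigma^{-1}}\cap\cdots\cap\Gamma^{\sigma^{-i}}$ directly down to the point $P=\Gamma_k$, proving the dimension drop at each stage by the same hyperplane-in-a-common-ambient argument. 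Your route has the small advantage of producing $P$ by an explicit formula (and of handling uniqueness in one line via $P_1\in\Gamma_k$), while the paper's inductive packaging makes the passage to the more general Theorem \ref{k-2} slightly more transparent. Substantively the two arguments are equivalent.
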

\begin{proof}
The hypotheses imply $k\ge0$.
For $k=0$, $\Gamma$ is a point $P$.
If $P^{\sigma^{n-1}}\in\Gamma$, then $P\in\Sigma$, a contradiction.
The remaining statements are trivial for this $k$.

Now suppose that the assertion is true for $(k-1)$-dimensional subspaces, and $k\ge1$.
Let denote by $\Omega=\Gamma\cap\Gamma^\sigma$. Clearly, $\langle\Omega,\Omega^\sigma\rangle \subseteq \Gamma^\sigma$ and $\dim \Gamma^\sigma=k$.
By our assumption, $\dim \Omega=k-1$ and also $\dim (\Omega \cap \Omega^\sigma)=k-2$.
Indeed,
\[ \dim(\Omega\cap\Omega^\sigma)=2(k-1)-\dim \la \Omega, \Omega^\sigma\ra\geq 2k-2-k=k-2.  \]
So,
\[ k-2 \leq \dim(\Omega\cap\Omega^\sigma) \leq k-1, \]
and since $\Omega\neq\Omega^\sigma$, otherwise by Lemma \ref{int} we should have $\Gamma \cap \Sigma \neq \emptyset$, we get $\dim(\Omega\cap\Omega^\sigma)=k-2$.
Therefore, there exists a point $P' \in \Omega$ such that
\[ \Omega=\Gamma\cap\Gamma^\sigma=\la P',P'^\sigma,\ldots,P'^{\sigma^{k-1}} \ra. \]
By induction hypothesis, $P'^{\sigma^{n-1}} \notin \Gamma\cap \Gamma^\sigma$.
So,
\[ \Gamma=\la P,P^\sigma,\ldots,P^{\sigma^k} \ra, \]
with $P=P'^{\sigma^{n-1}}$.

Regarding the uniqueness, if $\Gamma=\la Q,Q^\sigma,\ldots,Q^{\sigma^k}\ra$ for some point $Q$,
then $\Omega=\la Q^\sigma,\ldots,Q^{\sigma^k}\ra$.
By induction, this implies $Q^\sigma=P'$ above defined, and $Q=P$.

Finally note that $P^{\sigma^{n-1}}\in\Gamma$ would imply $\Gamma^{\sigma^{n-1}}=\Gamma$
and $\Gamma\cap\Sigma\neq\emptyset$, a contradiction.
\end{proof}

The next result follows for $r=1$ from Theorem \ref{k-1}.

\begin{theorem}\label{k-2}
Let $\Gamma$ be a subspace of $\PG(n-1,q^n)$ of dimension $k\ge0$
such that $\Gamma \cap \Sigma=\emptyset$, and $\dim(\Gamma\cap\Gamma^\sigma)\ge k-2$.
Let $r$ be the least positive integer satisfying the condition
\begin{equation}\label{def_r}
\dim(\Gamma\cap\Gamma^\sigma\cap\Gamma^{\sigma^2}\cap\ldots\cap\Gamma^{\sigma^r})>k-2r.
\end{equation}
Then there is a point $P\in\PG(n-1,q)$ satisfying
\begin{enumerate}[(i)]
\item $P$, $P^\sigma$, $\ldots$, $P^{\sigma^{k-r+1}}$ are independent points contained
in $\Gamma$;
\item $P^{\sigma^{n-1}}\not\in\Gamma$.
\end{enumerate}
If $r<(k+2)/2$, then the point $P$ satisfying conditions (i) and (ii) is unique.
\end{theorem}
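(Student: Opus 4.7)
The plan is to argue by induction on $k$, using $\Omega:=\Gamma\cap\Gamma^\sigma$ to step down by two dimensions. When $r=1$, the assertion is exactly Theorem~\ref{k-1}, so I henceforth assume $r\ge 2$. The minimality of $r$ forces $\dim\Omega\le k-2$ while the hypothesis forces $\dim\Omega\ge k-2$, so $\dim\Omega=k-2$. I will exploit throughout that $\sigma$ has order $n$ as a collineation, so that $P^{\sigma^{n-1}}=P^{\sigma^{-1}}$ and $\Gamma^{\sigma^n}=\Gamma$.

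First I would verify that $\Omega$ satisfies the hypotheses of the theorem with $k':=k-2$ and $r':=r-1$. Disjointness $\Omega\cap\Sigma=\emptyset$ is inherited from $\Gamma$; since $\Omega$ and $\Omega^\sigma$ both lie in $\Gamma^\sigma$, the Grassmann formula gives $\dim(\Omega\cap\Omega^\sigma)\ge 2(k-2)-k=k'-2$; and since $\Omega\cap\Omega^\sigma\cap\cdots\cap\Omega^{\sigma^j}=\Omega_{j+1}$ while $k'-2j=k-2(j+1)$, the minimal index for $\Omega$ is indeed $r-1$. By induction there is $P'\in\Omega$ with $P',P'^\sigma,\ldots,P'^{\sigma^{k-r}}$ independent in $\Omega$ and $P'^{\sigma^{n-1}}\notin\Omega$. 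I would set $P:=P'^{\sigma^{n-1}}$. Then $P\in\Gamma^{\sigma^n}=\Gamma$ (because $P'\in\Omega\subseteq\Gamma^\sigma$), and the points $P^\sigma,\ldots,P^{\sigma^{k-r+1}}$ coincide with $P',\ldots,P'^{\sigma^{k-r}}$, hence are independent points of $\Omega\subseteq\Gamma$. The decisive fact is that $P\notin\Omega$ by (ii') for $P'$: this prevents $P$ from lying in the span of $P^\sigma,\ldots,P^{\sigma^{k-r+1}}$ (giving (i)), and it forces $P\notin\Gamma^\sigma$, whence $P^{\sigma^{-1}}\notin\Gamma$ (giving (ii)).

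For uniqueness under $r<(k+2)/2$, the translated bound $r'<(k'+2)/2$ holds as well. Given any $Q$ satisfying (i)--(ii), condition (i) together with its $\sigma$-image show $Q^\sigma,Q^{\sigma^2},\ldots,Q^{\sigma^{k-r+1}}\in\Gamma\cap\Gamma^\sigma=\Omega$; they are independent as a subset of the independent system given by (i), and $(Q^\sigma)^{\sigma^{n-1}}=Q\notin\Omega$ because (ii) forces $Q\notin\Gamma^\sigma$. Thus $Q^\sigma$ is admissible for $\Omega$, and the inductive uniqueness yields $Q^\sigma=P^\sigma$, i.e.\ $Q=P$.

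The main obstacle I anticipate is the bookkeeping in the $r\ge 2$ step---pinning down $\dim\Omega=k-2$ together with the matching minimality index $r'=r-1$ and checking that (i')--(ii') transfer cleanly under the shift $P\leftrightarrow P^\sigma$---and disposing of the low-dimensional base cases, since the induction drops $k$ by two. For $k\in\{0,1\}$ either $r=1$ (dispatched by Theorem~\ref{k-1}) or $\Omega=\emptyset$, in which case (i)--(ii) hold trivially for every $P\in\Gamma$ and no uniqueness is being claimed anyway.
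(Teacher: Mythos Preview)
Your proof is correct and, for the existence part, follows essentially the same inductive route as the paper: pass to $\Omega=\Gamma\cap\Gamma^\sigma$, check that it inherits the hypotheses with parameters $(k',r')=(k-2,r-1)$, and pull back $P'$ to $P=P'^{\sigma^{-1}}$. Your derivation of~(ii) is in fact slightly cleaner than the paper's: from $P\notin\Omega$ and $P\in\Gamma$ you get $P\notin\Gamma^\sigma$ and hence $P^{\sigma^{-1}}\notin\Gamma$ directly, whereas the paper argues via a dimension count on $\Gamma\cap\cdots\cap\Gamma^{\sigma^{r-1}}$ established in its preliminary remarks.

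Where your argument genuinely diverges is in the uniqueness step. The paper does \emph{not} continue the induction: it first proves (in its ``preliminary remarks'') that $\dim(\Gamma\cap\cdots\cap\Gamma^{\sigma^t})=k-2t$ for $t<r$ and $\dim(\Gamma\cap\cdots\cap\Gamma^{\sigma^r})=k-2r+1$ when $r<(k+3)/2$, then observes that this deep intersection $\Lambda$ is exactly $\langle P^{\sigma^r},\ldots,P^{\sigma^{k-r+1}}\rangle$ and has $\dim(\Lambda\cap\Lambda^\sigma)=\dim\Lambda-1$, so Theorem~\ref{k-1} applies to $\Lambda$ in one shot and pins down $P^{\sigma^r}$, hence $P$. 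Your approach instead reuses the $\Omega$-reduction for uniqueness: show that any competitor $Q$ yields an admissible $Q^\sigma$ for $\Omega$, and invoke the inductive uniqueness there (the bound $r'<(k'+2)/2$ transfers since $r-1<k/2$ is equivalent to $r<(k+2)/2$). This is more uniform---one mechanism for both existence and uniqueness---and avoids the separate chain of dimension equalities, at the cost of not recording those equalities, which the paper also uses elsewhere (e.g.\ to bound $r\le(k+3)/2$). Both arguments are valid.
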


We will call the integer $r$ of the above statement the \emph{intersection number of} $\Gamma$ w.r.t.\ $\sigma$ and we will denote it by $\mathrm{intn}_{\sigma}(\Gamma)$.

\begin{proof}
\textit{Preliminary remarks.}
Since $\sigma$ is a collineation and since $\dim (\Gamma \cap \Gamma^\sigma)\geq k-2$, for any positive integer $t$ it holds
\begin{gather*}
\dim(\Gamma\cap\Gamma^\sigma\cap\ldots\cap\Gamma^{\sigma^{t+1}})=\\
\dim\Gamma+\dim(\Gamma^\sigma\cap\ldots\cap\Gamma^{\sigma^{t+1}})
-\dim\left(\la\Gamma\cup(\Gamma^\sigma\cap\ldots\cap\Gamma^{\sigma^{t+1}})\ra\right)\ge\\
\dim\Gamma+\dim(\Gamma\cap\ldots\cap\Gamma^{\sigma^t})
-\dim\left(\la\Gamma\cup\Gamma^\sigma\ra\right)\ge
\dim(\Gamma\cap\ldots\cap\Gamma^{\sigma^t})-2.
\end{gather*}
This implies $\dim(\Gamma\cap\ldots\cap\Gamma^{\sigma^t})=k-2t$ for any $0\le t<r$;
so, taking $t=r-1$, $k-2(r-1)\ge-1$, that is $r\le(k+3)/2$.
Furthermore, if $\dim(\Gamma\cap\ldots\cap\Gamma^{\sigma^t})\ge0$,
then
\begin{equation}\label{stepconeccez}
\dim(\Gamma\cap\Gamma^\sigma\cap\ldots\cap\Gamma^{\sigma^{t+1}})
\le
\dim(\Gamma\cap\ldots\cap\Gamma^{\sigma^t})-1,
\end{equation}
for otherwise $\Sigma\cap\Gamma\cap\ldots\cap\Gamma^{\sigma^t}\neq\emptyset$.
This implies
\begin{equation}\label{pre-unicita}
\dim(\Gamma\cap\Gamma^\sigma\cap\Gamma^{\sigma^2}\cap\ldots\cap\Gamma^{\sigma^r})=k-2r+1\quad
\mbox{ for }r\neq\frac{k+3}2.
\end{equation}
{Note that for $r=\frac{k+3}2$ then $\dim(\Gamma\cap\Gamma^\sigma\cap\Gamma^{\sigma^2}\cap\ldots\cap\Gamma^{\sigma^r})=k-2r+2=-1$.}

\textit{Existence of $P$, by induction on $r$.}
For $r=1$, the assertion follows from Theorem \ref{k-1}.
Assume then that Theorem \ref{k-2} holds (except possibly for the uniqueness part)
for $r-1$, and $r\ge2$.
Let $\Omega=\Gamma\cap\Gamma^\sigma$ and $\dim\Omega=k-2=:k'$.
If $k'=-1$, then the thesis is trivial.

Now suppose $k'\geq0$.
Then it holds
\[
\dim(\Omega\cap\ldots\cap\Omega^{\sigma^t})=k'-2t
\]
for $t<r-1$, whereas
\[
\dim(\Omega\cap\ldots\cap\Omega^{\sigma^{r-1}})>k'-2(r-1)=k-2r.
\]
By induction hypothesis there is a point $P'\in\PG(n-1,q^n)$ satisfying
\begin{enumerate}[(A)]
\item $P',P'^{\sigma},\ldots,P'^{\sigma^{k'-(r-1)+1}}=P'^{\sigma^{k-r}}$ are
independent points;
\item $P',P'^{\sigma},\ldots,P'^{\sigma^{k-r}}\in\Omega$;
\item $P'^{\sigma^{n-1}}\not\in\Omega$.
\end{enumerate}
Let $P=P'^{\sigma^{n-1}}$.
Then (B) implies that
$P$, $P^\sigma$, $\ldots$, $P^{\sigma^{k-r+1}}$ are points contained
in $\Gamma$;
both (C) and (A) imply that they are independent.
$P^{\sigma^{n-1}}\in\Gamma$
would imply
\[
P^{\sigma^{r-2}},P^{\sigma^{r-1}},\ldots,P^{\sigma^{k-r+1}}\in\Gamma\cap\Gamma^\sigma\cap\ldots
\cap\Gamma^{\sigma^{r-1}},
\]
contradicting $\dim(\Gamma\cap\Gamma^\sigma\cap\ldots \cap\Gamma^{\sigma^{r-1}})=k-2r+2$.

\textit{Uniqueness of $P$.}
By the previous considerations it follows that there exists at least one point $P$ such that $P$, $P^\sigma$, $\ldots$, $P^{\sigma^{k-r+1}}$ are independent points contained in $\Gamma$.
From (\ref{pre-unicita}) it follows that
\[ \Lambda:=\Gamma\cap\Gamma^\sigma\cap\Gamma^{\sigma^2}\cap\ldots\cap\Gamma^{\sigma^r}= \la P^{\sigma^r}, \ldots, P^{\sigma^{k-r+1}} \ra, \]
has dimension
$k-2r+1>-1$.
Furthermore, $\dim(\Lambda \cap \Lambda^\sigma)=k-2r$, otherwise
$\Gamma\cap\Sigma\neq\emptyset$.
It follows that $\Lambda$ satisfies the hypotheses of Theorem \ref{k-1} and hence the point $P$ is unique.
\end{proof}

\begin{remark}\label{L(P)}
It is clear that $P$ is as in the previous result, it follows that
\[ \dim L(P)\geq k-r+2, \]
where $L(P)=\la P,P^\sigma,\ldots,P^{\sigma^{n-1}} \ra$.
\end{remark}

\begin{remark}
A similar idea to the intersection number for a vertex of a linear set has been presented in \cite{NPH} (see also \cite{GiuZ,NPH2}), where the authors used sequences of the dimensions of certain intersections as invariants for rank metric codes. See also the last section.
\end{remark}

\section{Characterization of linear sets of LP-type}

\subsection{Sufficient conditions}
We are now ready to state sufficient conditions for a linear set to be of LP-type.
In the following we denote by $\NNN(-)$ the norm over $\Fq$, for short.

\begin{theorem}\label{charact1}
In $\PG(n-1,q^n)$,  $n\ge4$,
let $\Gamma$ be a subspace of dimension $n-3$, $\Lambda$ a line, and
$\Sigma\cong\PG(n-1,q)$ a canonical subgeometry, such that
$\Gamma \cap \Sigma=\emptyset=\Gamma\cap\Lambda$.
Assume $L=\p_{\Gamma,\Lambda}(\Sigma)$ is a scattered linear set of $\Lambda$.
If $\mathrm{intn}_{\sigma}(\Gamma)=2$ for some
generator $\sigma$ of the subgroup of $\mathrm{P}\Gamma\mathrm{L}(n,q^n)$ fixing $\Sigma$ pointwise, then there exists a unique point $P$ such that
\[ \Gamma=\langle P,P^{\sigma},\ldots,P^{\sigma^{n-4}},Q \rangle. \]
Furthermore, if the line $\langle P^{\sigma^{n-1}}, P^{\sigma^{n-3}} \rangle$ meets $\Gamma$, then $L$ is of LP-type.
\end{theorem}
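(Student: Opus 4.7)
My plan is to combine Theorem~\ref{k-2} with the line hypothesis to produce an explicit description of $\Gamma$, then to change basis so that the projection computation identifies $L$ directly as $L_{s,-\beta}^n$ for a suitable $\beta \in \F_{q^n}^*$.

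First I apply Theorem~\ref{k-2} with $k = n-3$ and $r = \mathrm{intn}_\sigma(\Gamma) = 2$ to obtain a point $P$ such that $P, P^\sigma, \ldots, P^{\sigma^{n-4}}$ are independent in $\Gamma$ and $P^{\sigma^{n-1}} \notin \Gamma$. Since these $n-3$ points span an $(n-4)$-subspace of $\Gamma$, any $Q \in \Gamma$ outside that span yields $\Gamma = \langle P, P^\sigma, \ldots, P^{\sigma^{n-4}}, Q\rangle$. Uniqueness of $P$ is immediate from Theorem~\ref{k-2} when $n \geq 6$; the small cases $n = 4, 5$ should follow from a separate argument using the line hypothesis and the scattered condition.

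Next, fix a semilinear lift of $\sigma = \hat\sigma^s$ with $\gcd(s,n) = 1$, and set $\mathbf{v}_i = \sigma^i(\mathbf{P})$. The assumption $\langle P^{\sigma^{n-1}}, P^{\sigma^{n-3}}\rangle \cap \Gamma \neq \emptyset$ supplies $\alpha, \beta \in \F_{q^n}$, not both zero, with $\alpha \mathbf{v}_{n-1} + \beta \mathbf{v}_{n-3} \in \hat\Gamma$ (the underlying vector space). The case $\beta = 0$ is excluded by $\mathbf{v}_{n-1} \notin \hat\Gamma$ (Theorem~\ref{k-2}). The case $\alpha = 0$ would put $\mathbf{v}_{n-3} \in \hat\Gamma$; together with $\mathbf{v}_0, \ldots, \mathbf{v}_{n-4}$ this forces $\Gamma = \langle \mathbf{v}_0, \ldots, \mathbf{v}_{n-3}\rangle$, giving $\dim(\Gamma \cap \Gamma^\sigma) \geq n-4 = k-1$ and contradicting $\mathrm{intn}_\sigma(\Gamma) = 2$. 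Rescaling, I may write the relation as $\mathbf{v}_{n-1} + \beta \mathbf{v}_{n-3} \in \hat\Gamma$ with $\beta \neq 0$.

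The key computation uses $\{\mathbf{v}_i\}_{i=0}^{n-1}$ as a basis of $V = \F_{q^n}^n$. Assuming $L(P) = V$ (addressed below), in this basis $\hat\Gamma$ is the common kernel of $L_1 = x_{n-2}$ and $L_2 = x_{n-3} - \beta x_{n-1}$. Because $\sigma$ fixes $\Sigma$ pointwise, $\phi(x) := (x, x^q, \ldots, x^{q^{n-1}})$ satisfies $\sigma(\phi(x)) = \phi(x)$; expanding $\phi(x) = \sum_i \lambda_i(x)\mathbf{v}_i$ and comparing with the $q^s$-semilinear action of $\sigma$ forces $\lambda_i(x) = \mu(x)^{q^{si}}$ for a single $\F_q$-linear map $\mu\colon \F_{q^n} \to \F_{q^n}$, which is a bijection since $\phi$ is injective on $\F_{q^n}^*$. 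Applying $L_1, L_2$, the projection sends $\langle \phi(x)\rangle$ to
\[
\langle\,(\mu(x)^{q^{s(n-2)}},\; \mu(x)^{q^{s(n-3)}} - \beta\,\mu(x)^{q^{s(n-1)}})\,\rangle_{\F_{q^n}}.
\]
Substituting $y = \mu(x)^{q^{s(n-2)}}$ (a bijection of $\F_{q^n}$) and using $q^{sn} \equiv 1 \pmod{q^n - 1}$ simplifies the second entry to $y^{q^{n-s}} - \beta y^{q^s}$, so $L = L_{s,-\beta}^n$ and is of LP-type.

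The main obstacle I foresee is justifying $L(P) = V$; without the spanning property of the $\sigma$-orbit of $P$ the expansion $\phi(x) = \sum \mu(x)^{q^{si}}\mathbf{v}_i$ is unavailable for all $x$. I expect this to follow from the scattered rank-$n$ hypothesis: a proper $\sigma$-invariant subspace $L(P)$ would, via Lemma~\ref{int}, meet $\Sigma$ in a proper subgeometry whose projection has strictly smaller $\F_q$-rank, forcing a collapse incompatible with scatteredness.
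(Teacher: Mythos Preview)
Your approach is essentially the paper's: apply Theorem~\ref{k-2}, use the line hypothesis to pin down $Q$, show the $\sigma$-orbit of $P$ spans, and then compute the projection in the resulting basis. The paper carries out the last step by invoking \cite[Proposition~3.1]{BoPol} to move $R=P^{\sigma^{n-2}}$ to $\langle(1,0,\ldots,0)\rangle$ and working in standard coordinates, whereas you use the abstract basis $\{\mathbf v_i\}$; these are equivalent computations.

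Regarding your self-identified obstacle, the scatteredness route is unnecessarily indirect. The paper observes that the line hypothesis forces $Q\in L(P)$, hence $\Gamma\subseteq L(P)$; if $\dim L(P)\le n-2$ then both $\Gamma$ and $\Gamma^\sigma$ lie in $L(P)$, giving $\dim(\Gamma\cap\Gamma^\sigma)\ge 2(n-3)-(n-2)=n-4$, which contradicts $\mathrm{intn}_\sigma(\Gamma)=2$ (this forces $\dim(\Gamma\cap\Gamma^\sigma)=n-5$). This same observation also cleanly disposes of your $\alpha=0$ case without needing to check independence of $\mathbf v_0,\ldots,\mathbf v_{n-3}$ separately.
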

\begin{proof}
An integer $s$ exists such that $(s,n)=1$ and
$\sigma=\hat{\sigma}^s$, i.e. the $i$-th component
(\footnote{Starting to count from zero.}) of
$\la(x_0,x_1,\ldots,x_{n-1})\ra_{\F_{q^n}}^\sigma$ is
$x_{i+s}^{q^s}$, where $i+s$ is seen modulo $n$.
By Theorem \ref{k-2}, there exist $P$ and $Q$ in $\Gamma$ such that
\[ \Gamma=\la P,P^\sigma,\ldots,P^{\sigma^{n-4}},Q \ra. \]
Denote by $R=P^{\sigma^{n-2}}$, then
\[ \Gamma=\la R^{\sigma^2},R^{\sigma^3},\ldots,R^{\sigma^{n-2}},Q \ra, \]
and $Q$ may be chosen in $\la R^\sigma, R^{\sigma^{n-1}}\ra$.
If $\dim \langle R,R^\sigma,\ldots,R^{\sigma^{n-1}}\rangle<n-1$, then, since $Q \in \la R^\sigma, R^{\sigma^{n-1}}\ra$, it follows that
\[ \Gamma \subseteq \langle R,R^\sigma,\ldots,R^{\sigma^{n-1}}\rangle, \]
i.e.\ $\Gamma$ is contained in a subspace fixed by $\sigma$ of dimension either $n-3$ or $n-2$.
In both the cases we get a contradiction because of $\mathrm{intn}_{\sigma}(\Gamma)=2$.
So, $\dim \la R,R^\sigma,\ldots,R^{\sigma^{n-1}}\ra=n-1$, and by \cite[Proposition 3.1]{BoPol}
there exists a linear collineation $\mathbf{k}$ fixing $\Sigma$ such that
$R^{\mathbf{k}}=\la(1,0,\ldots,0)\ra_{\F_{q^n}}$.
Clearly, $\Gamma^{\mathbf{k}}$ satisfies the same hypothesis as $\Gamma$, since $\mathbf{k}$ and $\sigma$ commute.
For these reasons, we may assume that $R=\la(1,0,\ldots,0)\ra_{\F_{q^n}}$.
In particular, it follows that the coordinates of $R^{\sigma^i}$ are $\mathbf{e}_{is \pmod{n}}$, where $\mathbf{e}_j$ is the vector whose $j$-th component is one and all the others are zero.
And by hypothesis we may assume that
$Q=\la\mathbf e_s-\delta\mathbf e_{s(n-1)}\ra_{\F_{q^n}}$.
Hence we can choose as $\Lambda=\la R,R^{\sigma^{n-1}} \ra$, so $\Gamma$ has equations
$x_0=0$, $x_{s(n-1)}=-\delta x_s$,
and $\Lambda$ is defined by $x_i=0$ for $i \in \{s,\ldots,s(n-2)\}$.

Therefore,
\[ L=\p_{\Gamma,\Lambda}(\Sigma)\simeq \{ \la (x,\delta x^{q^s}+x^{q^{s(n-1)}}) \ra_{\F_{q^n}} \colon x \in \F_{q^n}  \}, \]
i.e. $L$ is of LP-type.
\end{proof}

Each linear set of LP-type $L_{s,\delta}^n$ \eqref{LPform} of $\PG(1,q^n)$, with $n\geq 4$ and $(s,n)=1$,
can be realized as the projection of $\Sigma$ choosing $\Gamma$ and $\Lambda$ as follows
\[ \Gamma \colon \left\{ \begin{array}{llr} x_0=0 \\ x_{s(n-1)}=-\delta x_s \end{array} \right. \,\,\, \text{and} \,\,\, \Lambda\colon x_i=0, \quad i \in \{s,\ldots,s(n-2)\}. \]
Therefore, as a direct consequence of Theorem \ref{charact1} we provide a characterization result of linear sets of LP-type.

\begin{theorem}\label{charact1.1}
Let $\Sigma$ be a canonical subgeometry of $\PG(n-1,q^n)$, $q>2$ and $n\geq 4$.
Let $L$ be a scattered linear set in $\Lambda=\PG(1,q^n)$.
Then $L$ is a linear set of LP-type if and only if
\begin{enumerate}[(i)]
\item there exists an $(n-3)$-subspace $\Gamma$  of $\PG(n-1,q^n)$
such that $\Gamma \cap \Sigma=\Gamma\cap\Lambda=\emptyset$ and $L=\p_{\Gamma,\Lambda}(\Sigma)$;
\item there exists a generator $\sigma$ of the subgroup of $\mathrm{P}\Gamma\mathrm{L}(n,q^n)$ fixing $\Sigma$ pointwise, such that
$\mathrm{intn}_{\sigma}(\Gamma)=2$;
\item there exist a unique point $P\in\PG(n-1,q^n)$ and some point $Q$ such that
\[ \Gamma=\langle P,P^{\sigma},\ldots,P^{\sigma^{n-4}},Q \rangle; \]
\item the line $\langle P^{\sigma^{n-1}}, P^{\sigma^{n-3}}\rangle$ meets $\Gamma$.
\end{enumerate}
\end{theorem}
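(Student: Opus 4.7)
My plan is to prove the two implications separately. For the \emph{if} direction, hypotheses (i) and (ii) coincide with the assumptions of Theorem~\ref{charact1}; that theorem already produces the point $P$ (so (iii) is in fact implied by (i) and (ii)), and together with (iv) it concludes that $L$ is of LP-type. Thus this direction is immediate.

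For the \emph{only if} direction, suppose $L$ is of LP-type. Modulo $\mathrm{P}\Gamma\mathrm{L}$-equivalence I may assume $L = L_{s,\delta}^n$, and use the explicit projective realisation recalled just before the theorem: set $\sigma = \hat\sigma^s$ and
\[
\Gamma\colon\ x_0 = 0,\ x_{s(n-1)} = -\delta\, x_s, \qquad
\Lambda\colon\ x_i = 0 \text{ for } i \in \{s, 2s, \ldots, (n-2)s\}.
\]
An $\F_{q^n}$-basis for $\Gamma$ is $\mathbf{e}_{2s}, \mathbf{e}_{3s}, \ldots, \mathbf{e}_{(n-2)s}, \mathbf{e}_s - \delta\, \mathbf{e}_{(n-1)s}$. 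Direct inspection gives $\Gamma \cap \Sigma = \emptyset$ (a point $\langle(y, y^q, \ldots, y^{q^{n-1}})\rangle$ of $\Sigma$ lying in $\Gamma$ forces $y=0$ via $x_0 = 0$) and $\Gamma \cap \Lambda = \emptyset$ (points of $\Lambda$ have support in $\{0, (n-1)s\}$, both positions killed by the equations of $\Gamma$). The equality $L = \p_{\Gamma,\Lambda}(\Sigma)$ is the projective description of $L_{s,\delta}^n$, so (i) holds.

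For (ii), computing $\Gamma^{\sigma^j}$ by the index-shift rule (and raising the scalar $\delta$ to the $q^{js}$-th power) gives
\[
\Gamma \cap \Gamma^\sigma = \langle \mathbf{e}_{3s}, \ldots, \mathbf{e}_{(n-2)s}\rangle,\qquad
\Gamma \cap \Gamma^\sigma \cap \Gamma^{\sigma^2} = \langle \mathbf{e}_{4s}, \ldots, \mathbf{e}_{(n-2)s}\rangle,
\]
of projective dimensions $n-5 = k-2$ and $n-6 = k-3 > k-4$ with $k = n-3$, so $\mathrm{intn}_\sigma(\Gamma) = 2$. Condition (iii) is then furnished by Theorem~\ref{charact1} applied to this $\Gamma$ and $\sigma$, which produces the unique $P$; from the basis above one reads off $P = \langle \mathbf{e}_{2s}\rangle$ and $Q = \langle \mathbf{e}_s - \delta \mathbf{e}_{(n-1)s}\rangle$, because $P^{\sigma^i} = \langle \mathbf{e}_{(i+2)s}\rangle$ for $0 \leq i \leq n-4$ together with $Q$ span $\Gamma$. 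Finally $P^{\sigma^{n-1}} = \langle \mathbf{e}_s\rangle$ and $P^{\sigma^{n-3}} = \langle \mathbf{e}_{(n-1)s}\rangle$, so $Q$ lies on $\langle P^{\sigma^{n-1}}, P^{\sigma^{n-3}}\rangle$, which consequently meets $\Gamma$ in $Q$, giving (iv).

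The one step that requires attention is the intersection computation for (ii): once $\Gamma^\sigma$ and $\Gamma^{\sigma^2}$ are written in coordinates, one must track which coordinate positions, and which linear relations between them, survive after intersecting. The clean observation making this easy is that $\Gamma^{\sigma^j}$ is supported on every position except $(1+j)s \bmod n$, with the single link $x_{(n-1+j)s} = -\delta^{q^{js}} x_{(1+j)s}$; the successive intersections then strip away the linked basis vectors one at a time, leaving only the pure $\mathbf{e}_{is}$ with $i$ in the appropriate middle range. This is the only place I expect a calculation of substance, everything else being structural.
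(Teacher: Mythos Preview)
Your proof is correct and follows essentially the same approach as the paper: the \emph{if} direction is an immediate application of Theorem~\ref{charact1}, and for the \emph{only if} direction you exhibit the explicit vertex $\Gamma$ (the same one the paper writes down just before the statement) and verify conditions (i)--(iv) by direct computation. The paper itself is terse here, simply declaring the result a ``direct consequence'' of Theorem~\ref{charact1} together with the explicit realisation of $L_{s,\delta}^n$ as a projection; you have filled in exactly the details that verification requires.

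One small slip in your final paragraph: the equations of $\Gamma^{\sigma^j}$ are $x_{js}=0$ and $x_{(n-1+j)s}=-\delta^{q^{js}}x_{(1+j)s}$, so $\Gamma^{\sigma^j}$ is supported on every coordinate position \emph{except $js$} (not $(1+j)s$ as you wrote). Your link equation is correct, and the explicit intersections $\Gamma\cap\Gamma^\sigma=\langle\mathbf e_{3s},\ldots,\mathbf e_{(n-2)s}\rangle$ and $\Gamma\cap\Gamma^\sigma\cap\Gamma^{\sigma^2}=\langle\mathbf e_{4s},\ldots,\mathbf e_{(n-2)s}\rangle$ are right, so the typo is harmless.
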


\subsection{Necessary conditions}

Very recently, Csajb\'ok, Marino and Polverino in \cite{CMP} have investigated the equivalence problem between $\F_q$-linear sets of rank $n$ on the projective line $\PG(1,q^n)$.
The idea is first to study the $\Gamma\mathrm{L}(2,q^n)$-orbits of the subspace $U$ defining the linear set $L_U$ and then to study the equivalence between two linear sets.
More precisely, they give the following definition of $\Gamma\mathrm{L}$-class (see \cite[Definitions 2.5]{CMP}) of an $\F_q$-linear set of a line.

\smallskip

Let $L_U$ be an $\mathbb{F}_q-$linear set of $\PG(V,\mathbb{F}_{q^n})=\PG(1,q^n)$ of rank $n$ with maximum field of linearity $\mathbb{F}_q$ (\footnote{The \emph{maximum field of linearity} of an $\F_q$-linear set $L_U$ as $\F_{q^\ell}$ if $\ell$ is the largest integer such that $\ell \mid n$ and $L_U$ is an $\F_{q^\ell}$-linear set.}).

We say that $L_U$ is of $\Gamma\mathrm{L}$-\emph{class} $s$ if $s$ is the greatest integer such that there exist $\mathbb{F}_q$-subspaces $U_1,\ldots,U_s$ of $V$ with $L_{U_i}=L_U$ for $i \in \{1,\ldots,s\}$ and there is no $f \in \Gamma \mathrm{L}(2,q^n)$ such that $U_i=U_j^f$ for each $i\neq j$, $i,j \in \{1,2,\ldots,s\}$.

\smallskip

If $L_U$ is of $\Gamma \mathrm{L}$-class one, then $L_U$ is said to be \emph{simple}, i.e. when the $\Gamma\mathrm{L}(2,q^n)$-orbit of $U$ completely determine the $\mathrm{P}\Gamma\mathrm{L}(2,q^n)$-orbit of $L_U$.
For $n\le4$, any linear set in $\PG(1,q^n)$ is simple \cite[Theorem 4.5]{CMP}.

The $\Gamma\mathrm{L}$-class of a linear set is a projective invariant (by \cite[Proposition 2.6]{CMP}) and play a crucial role in the study of linear sets up to equivalences.
Using these notions and by developing some new techniques, the authors in \cite{CsMP2018} prove that in $\PG(1,q^5)$ each $\F_q$-linear set $L_f$ of rank $5$ and with maximum field of linearity $\F_q$ is of $\Gamma\mathrm{L}$-class at most $2$, proving also that if $L_U$ is equivalent to $L_f$ then $U$ is $\Gamma\mathrm{L}$-equivalent to either $U_f$ or to $U_f^\perp=U_{\hat{f}}$, where the non-degenerate symmetric bilinear form of $\F_{q^n}$ over $\F_q$ defined by
\[ \la x,y\ra= \Tr_{q^n/q}(xy), \]
\noindent for each $x,y \in \F_{q^n}$ is taken into account. Then the \emph{adjoint} $\hat{f}$ of the linearized polynomial $\displaystyle f(x)=\sum_{i=0}^{n-1} a_ix^{q^i} \in \tilde{\mathcal{L}}_{n,q}$ with respect to the bilinear form $\la,\ra$ is
\[ \hat{f}(x)=\sum_{i=0}^{n-1} a_i^{q^{n-i}}x^{q^{n-i}}, \]
i.e.
\[ \Tr_{q^n/q}(xf(y))=\Tr_{q^n/q}(y\hat{f}(x)), \]
for each $x,y \in \F_{q^n}$.

\smallskip

For linear sets of LP-type the following is known.

\begin{theorem}\label{classLP}\cite{CsMP2018,CsMZ2018}
A maximum scattered linear set of LP-type
\[
L_{s,\delta}^n=L_f=\{\la(x,\delta x^{q^s} + x^{q^{n-s}})\ra_{\F_{q^n}}\colon x\in \F_{q^n}\}\subseteq\PG(1,q^n), \]
with $(s,n)=1$ and $f(x)=\delta x^{q^s} + x^{q^{n-s}}$,
is of $\Gamma\mathrm{L}$-class less than or equal to $2$ for $n \in \{5,6,8\}$.
Furthermore, $L_U$ is equivalent to $L$ if and only if $U$ is $\Gamma\mathrm{L}(2,q^n)$-equivalent to either $U_{f}$ or to $U_f^\perp=U_{\hat{f}}$.
\end{theorem}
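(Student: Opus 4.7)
The plan is to assemble the statement from two pieces already in the literature: \cite{CsMP2018} covers $n=5$ and \cite{CsMZ2018} covers $n\in\{6,8\}$. In both cases the strategy proceeds in three stages—reduction to a classification of $q$-polynomials, verification that the maximum field of linearity is $\F_q$, and invocation of the relevant classification—so I would begin by setting up this framework.

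First, after an element of $\mathrm{P}\Gamma\mathrm{L}(2,q^n)$, we may assume $L_f=L_{s,\delta}^n$ does not contain $\langle(0,1)\rangle_{\F_{q^n}}$. Any $\F_q$-subspace $U$ of rank $n$ with this property has the form $U=U_g$ for some $q$-polynomial $g\in\F_{q^n}[x]$, and two such subspaces $U_g$, $U_h$ are $\Gamma\mathrm{L}(2,q^n)$-equivalent precisely when $g,h$ lie in the same orbit of the standard $\Gamma\mathrm{L}(2,q^n)$-action on $q$-polynomials. The $\Gamma\mathrm{L}$-class of $L_f$ therefore equals the number of $\Gamma\mathrm{L}(2,q^n)$-orbits of $q$-polynomials $g$ satisfying $L_g=L_f$, and the ``furthermore'' part amounts to showing that these orbits are exhausted by those of $f$ and $\hat f$. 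Next I would verify that the maximum field of linearity of $L_f$ is $\F_q$: for $n=5$ this is automatic since $5$ is prime and $L_f$ is scattered of rank $5$, hence not a single point; for $n\in\{6,8\}$ the scattered property together with $(s,n)=1$ and the explicit shape of $f$ rules out any proper subfield $\F_{q^d}$, $d\mid n$, $d>1$, as a field of linearity.

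With these preliminaries in place, for $n=5$ the main result of \cite{CsMP2018} asserts that every $\F_q$-linear set of rank $5$ in $\PG(1,q^5)$ with maximum field of linearity $\F_q$ has $\Gamma\mathrm{L}$-class at most $2$, with the two orbits represented by $U_g$ and $U_{\hat g}$ whenever the class equals $2$. Applied with $g=f$, and combined with the general identity $U_f^{\perp}=U_{\hat f}$ (which in particular yields $L_{U_{\hat f}}=L_f$ via the trace pairing $\langle x,y\rangle=\Tr_{q^n/q}(xy)$), this proves both assertions. For $n\in\{6,8\}$ the analogous classification, tailored to the LP-family, is carried out in \cite{CsMZ2018} and gives exactly the same conclusion.

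The main obstacle in a from-scratch proof would be the case analysis for $n\in\{6,8\}$: one must compute the stabilizer of $L_f$ inside $\mathrm{P}\Gamma\mathrm{L}(2,q^n)$ and then prove that every $q$-polynomial $g$ with $L_g=L_f$ lies, modulo this stabilizer, in the $\Gamma\mathrm{L}(2,q^n)$-orbit of either $f$ or $\hat f$. This reduces to nontrivial polynomial identities over $\F_{q^n}$ involving $\delta$ and the Frobenius exponents $q^s$ and $q^{n-s}$; the combinatorial bookkeeping needed to bound the set of admissible $g$ is the technical core of the cited papers, and reproducing it here would essentially amount to repeating their arguments.
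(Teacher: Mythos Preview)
Your proposal is essentially correct and matches what the paper does: the paper provides no proof for this theorem, merely citing \cite{CsMP2018} and \cite{CsMZ2018} in the statement itself; you have correctly identified that the $n=5$ case follows from the general $\Gamma\mathrm{L}$-class-$\le 2$ result of \cite{CsMP2018} and that $n\in\{6,8\}$ is handled in \cite{CsMZ2018}. Your sketch of how those arguments are organized (reduction to $q$-polynomials, maximum field of linearity, the identity $U_f^\perp=U_{\hat f}$) is accurate and goes beyond what the paper itself says.
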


Furthermore, in \cite{CMP,PhDthesis}, it has been shown that there are maximum scattered linear sets of LP-type of both $\Gamma\mathrm{L}$-classes one and two.

\medskip

For our purpose it is important to look to the $\Gamma\mathrm{L}$-class in a more geometric way.
The following result has been stated in \cite[Section 5.2]{CMP} as a consequence of \cite[Theorems 6 \& 7]{CSZ2015}.

\begin{theorem}\label{GLclassGeom}
The $\Gamma\mathrm{L}$-class of $L_U$ is the number of orbits in $\mathrm{Stab}(\Sigma)$ of  $(n-3)$-subspaces of $\PG(n-1,q^n)$ containing a $\Gamma$ disjoint from $\Sigma$ and from $\Lambda$ such that $\p_{\Gamma,\Lambda}(\Sigma)$ is equivalent to $L_U$.
\end{theorem}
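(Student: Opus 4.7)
The plan is to deduce the statement directly from the projection--subspace dictionary of Theorems 6 and 7 of CSZ2015. By the Lunardon--Polverino realization recalled in the introduction, once $\Sigma$ and the line $\Lambda\subseteq\PG(n-1,q^n)$ are fixed, any rank-$n$ $\F_q$-subspace $U$ of $V$ (with $\langle U\rangle_{\F_{q^n}}=V$) produces an $(n-3)$-dimensional vertex $\Gamma_U$ disjoint from $\Sigma$ and from $\Lambda$ satisfying $L_U=\p_{\Gamma_U,\Lambda}(\Sigma)$; conversely, every such vertex is of the form $\Gamma_{U'}$ for some $U'$, unique up to an identification of $\Lambda$ with $\PG(V,\F_{q^n})$.

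The key ingredient is then the equivariance encapsulated in Theorems 6 and 7 of CSZ2015: two $\F_q$-subspaces $U,U'$ lie in the same $\Gamma\mathrm{L}(2,q^n)$-orbit on $V$ if and only if the associated vertices $\Gamma_U$ and $\Gamma_{U'}$ lie in the same $\mathrm{Stab}(\Sigma)$-orbit in $\PG(n-1,q^n)$. Moreover, two projections $\p_{\Gamma,\Lambda}(\Sigma)$ and $\p_{\Gamma',\Lambda}(\Sigma)$ are $\mathrm{P}\Gamma\mathrm{L}(2,q^n)$-equivalent as linear sets precisely when $\Gamma$ and $\Gamma'$ are $\mathrm{Stab}(\Sigma)$-conjugate modulo a suitable projectivity of the target line; this last flexibility is exactly what converts the equality $L_{U_i}=L_U$ appearing in the definition of $\Gamma\mathrm{L}$-class into the weaker equivalence appearing in the statement.

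With these two ingredients in place, I would simply match definitions: the $\Gamma\mathrm{L}$-class of $L_U$ is by construction the number of $\Gamma\mathrm{L}(2,q^n)$-orbits of $\F_q$-subspaces $U'$ realizing $L_U$, and by the $\mathrm{P}\Gamma\mathrm{L}$-invariance of the class this equals the analogous count for subspaces $U'$ required only to yield a linear set equivalent to $L_U$. The dictionary above then transports this count bijectively onto the number of $\mathrm{Stab}(\Sigma)$-orbits of $(n-3)$-subspaces $\Gamma$ disjoint from $\Sigma$ and $\Lambda$ such that $\p_{\Gamma,\Lambda}(\Sigma)$ is equivalent to $L_U$, which is precisely the quantity in the statement.

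I expect the main obstacle to be purely bookkeeping: verifying that the $\Gamma\mathrm{L}(2,q^n)$-action on subspaces of $V$ and the $\mathrm{Stab}(\Sigma)$-action on the ambient $\PG(n-1,q^n)$ correspond cleanly under the Lunardon--Polverino projection, so that orbits really map to orbits with the correct stabilizers. Since Theorems 6 and 7 of CSZ2015 already encode this intertwining, the remaining work amounts to quoting them and untangling the two equivalent formulations of the class.
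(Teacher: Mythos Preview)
Your proposal is correct and matches the paper's own treatment: the paper does not give an independent proof of this theorem but records it as a consequence of \cite[Theorems 6 \& 7]{CSZ2015} (via \cite[Section 5.2]{CMP}), which is exactly the dictionary you invoke. The one point worth tightening is your passage from ``$L_{U'}=L_U$'' to ``$L_{U'}$ equivalent to $L_U$'': this is indeed harmless because any $\varphi\in\mathrm{P}\Gamma\mathrm{L}(2,q^n)$ sending $L_{U'}$ to $L_U$ lifts to a $\Gamma\mathrm{L}(2,q^n)$-map carrying $U'$ into the same $\Gamma\mathrm{L}$-orbit as a subspace actually representing $L_U$, so the two counts of orbits coincide.
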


As a consequence of Theorems \ref{classLP} and \ref{GLclassGeom}, we have the following characterization for linear sets of LP-type.

\begin{theorem}\label{charact3}
Let $L$ be a maximum scattered linear set in $\Lambda=\PG(1,q^n)$ with $n\le6$ or $n=8$.
Then $L$ is a linear set of LP-type if and only if for each $(n-3)$-subspace $\Gamma$ of $\PG(n-1,q^n)$ such that $L=\p_{\Gamma,\Lambda}(\Sigma)$, the following holds:
\begin{enumerate}[(i)]
\item there exists a generator $\sigma$ of the subgroup of $\mathrm{P}\Gamma\mathrm{L}(n,q^n)$ fixing $\Sigma$ pointwise, such that
$\mathrm{intn}_{\sigma}(\Gamma)=2$;
\item if $P$ is the unique point of $\PG(n-1,q^n)$ such that
\[ \Gamma=\langle P,P^{\sigma},\ldots,P^{\sigma^{n-4}},Q \rangle, \]
then the line $\langle P^{\sigma^{n-1}}, P^{\sigma^{n-3}}\rangle$ meets $\Gamma$.
\end{enumerate}
\end{theorem}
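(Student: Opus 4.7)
The plan has two directions, of which the ``if'' direction is immediate: given any $(n-3)$-subspace $\Gamma$ with $L = \p_{\Gamma,\Lambda}(\Sigma)$ satisfying (i) and (ii), Theorem \ref{charact1} forces $L$ to be of LP-type. The substance lies in the ``only if'' direction, where from an LP-type $L$ one must establish (i)--(ii) for \emph{every} admissible vertex $\Gamma$, not merely some.

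For the ``only if'' direction the engine is the combination of Theorems \ref{classLP} and \ref{GLclassGeom}. By the former, for $n \in \{5,6,8\}$ the $\Gamma\mathrm{L}$-class of $L$ is at most two, and every $\F_q$-subspace $U$ with $L_U = L$ is $\Gamma\mathrm{L}(2,q^n)$-equivalent either to $U_f$ or to $U_{\hat f}$, where $f(x) = \delta x^{q^s} + x^{q^{n-s}}$; for $n=4$ every linear set in $\PG(1,q^4)$ is simple, so the class equals one. By the latter, the set of admissible vertices $\Gamma$ accordingly splits into at most two $\mathrm{Stab}(\Sigma)$-orbits. A direct computation with the adjoint formula yields $\hat f(x) = \delta^{q^{n-s}} x^{q^{n-s}} + x^{q^s}$, which is again an LP-polynomial with parameters $(n-s, \delta^{q^{n-s}})$; hence each of the (at most two) orbits contains a standard LP-vertex of the shape $\Gamma_0\colon x_0 = 0,\ x_{s(n-1)} = -\delta x_s$ displayed just before Theorem \ref{charact1.1}.

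For each such $\Gamma_0$, conditions (i) and (ii) are checked by direct inspection, using the explicit generators $\mathbf e_{2s}, \ldots, \mathbf e_{(n-2)s}, \mathbf e_s - \delta\mathbf e_{s(n-1)}$ and the fact that $\sigma$ shifts indices by $s$ and applies a Frobenius to the scalar $\delta$. The chain $\Gamma_0 \cap \Gamma_0^\sigma \cap \cdots \cap \Gamma_0^{\sigma^t}$ drops in dimension by the maximum $2$ from $t=0$ to $t=1$ but only by $1$ from $t=1$ to $t=2$, yielding $\mathrm{intn}_\sigma(\Gamma_0) = 2$; and the line $\la P^{\sigma^{n-1}}, P^{\sigma^{n-3}}\ra$ meets $\Gamma_0$ precisely at the ``extra'' point $Q = \la \mathbf e_s - \delta \mathbf e_{s(n-1)}\ra_{\F_{q^n}}$. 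Finally, conditions (i)--(ii) are $\mathrm{Stab}(\Sigma)$-invariant: if $\phi \in \mathrm{Stab}(\Sigma)$ and $\sigma' := \phi^{-1}\sigma\phi$, then $\sigma'$ still generates the pointwise stabilizer of $\Sigma$, and the projective incidences defining (i)--(ii) transfer verbatim to the pair $(\Gamma_0^\phi, \sigma')$. This extends the conditions from each standard $\Gamma_0$ to every $\Gamma$ in its orbit.

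The expected main obstacle is the second step: one must genuinely verify that the adjoint $\hat f$ of an LP-polynomial remains LP-type with a controlled change of parameters, so that the potentially ``second'' $\Gamma\mathrm{L}$-orbit also lives in the LP-framework rather than in some uncontrolled configuration. Without this observation only half of the admissible vertices would be covered. The hypothesis $n \in \{4,5,6,8\}$ enters solely through Theorem \ref{classLP}; outside this range an analogous bound on the $\Gamma\mathrm{L}$-class would be required.
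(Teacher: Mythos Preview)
Your proposal is correct and follows essentially the same approach as the paper: both use Theorem~\ref{classLP} (together with simplicity for $n=4$) to bound the $\Gamma\mathrm{L}$-class by two, observe that the adjoint $\hat f(x)=\delta^{q^{n-s}}x^{q^{n-s}}+x^{q^s}$ is again an LP-polynomial, and invoke Theorem~\ref{GLclassGeom} so that every admissible vertex lies in the $\mathrm{Stab}(\Sigma)$-orbit of a standard LP-vertex. You simply spell out two details the paper leaves implicit---the explicit verification of (i)--(ii) for the standard $\Gamma_0$ and the $\mathrm{Stab}(\Sigma)$-invariance of those conditions---but the overall architecture is the same.
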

\begin{proof}
Because of Theorem \ref{classLP} and \cite[Theorem 4.5]{CMP}, if $n\le6$
or $n=8$, then the two (possibly) not $\Gamma\mathrm{L}(2,q^n)$-equivalent representation for a linear set of LP-type \eqref{LPform} are
\[ U_{\delta x^{q^s} + x^{q^{n-s}}} \,\,\,\text{and}\,\,\,U_{\delta^{q^{n-s}} x^{q^{n-s}} + x^{q^{s}}}.  \]
Therefore, by Theorem \ref{GLclassGeom} we have that all the possible vertices of the projections to obtain a linear set of LP-type satisfy the hypothesis of Theorem \ref{charact1.1} and the assertion then follows.
\end{proof}

\begin{remark}
Note that Theorem \ref{charact3} guarantees that each vertex of the projection of a linear set of LP-type satisfies conditions $(i)$ and $(ii)$, whereas Theorem \ref{charact1.1} asserts the existence of a vertex of the projection of a linear set of LP-type satisfying these conditions.
\end{remark}

As we will see in Section \ref{newconstruction}, this result may turn out to be useful to construct new examples of maximum scattered linear sets in $\PG(1,q^n)$.

\section{A purely geometric description for odd $n$}\label{Geo}

The next lemma proves that, for $n$ odd, the only scattered linear sets of LP-type are exactly those described by Lunardon and Polverino in \cite{LP2001} and Sheekey \cite{Sh}.

\begin{lemma}\label{norm}
Let $L:=\{\la(x,\delta x^{q^s} + x^{q^{n-s}})\ra_{\F_{q^n}}\colon x\in \F_{q^n}\}\subseteq \PG(1,q^n)$, with $(s,n)=1$, and let $n>3$ be odd.
Then $L$ is scattered if and only if $\N(\delta)\neq 1$.
\end{lemma}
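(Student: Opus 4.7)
The plan is to reduce ``$L$ is not scattered'' to the solvability of a concrete equation in $\F_{q^n}^*$, and then decide that solvability via a norm computation that exploits the hypothesis that $n$ is odd.

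\textbf{Reformulation.} First I would unpack the scattered condition: $L$ fails to be scattered precisely when, for some $y \in \F_{q^n}$, the $\F_q$-linear map $g_y(x) = \delta x^{q^s} + x^{q^{n-s}} - yx$ has $\F_q$-kernel of dimension at least $2$. Equivalently, there exist $x \in \F_{q^n}^*$ and $\lambda \in \F_{q^n} \setminus \F_q$ with both $x$ and $\lambda x$ in $\ker g_y$. Writing out $g_y(x) = 0$ and $g_y(\lambda x) = 0$ and eliminating $y$ produces the key identity
\[
\delta(\lambda^{q^s} - \lambda)\, x^{q^s} \;=\; (\lambda - \lambda^{q^{n-s}})\, x^{q^{n-s}}.
\]
Since $(s,n)=1$, any $\lambda \notin \F_q$ forces $\lambda^{q^s} \neq \lambda$ and $\lambda^{q^{n-s}} \neq \lambda$, so both coefficients are nonzero and the equation is equivalent to
\[
x^{q^{n-s} - q^s} \;=\; \delta \cdot \frac{\lambda^{q^s} - \lambda}{\lambda - \lambda^{q^{n-s}}}.
\]

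\textbf{Image of the power map.} Next I would identify the image of $x \mapsto x^{q^{n-s}-q^s}$ on the cyclic group $\F_{q^n}^*$. Factoring the exponent as $q^s(q^{n-2s}-1)$ and noting $\gcd(q^s, q^n-1) = 1$, the image equals the subgroup of $d$-th powers with
\[
d \;=\; \gcd(q^{n-2s}-1,\, q^n-1) \;=\; q^{\gcd(2s,\,n)} - 1.
\]
This is where the parity hypothesis enters: together with $(s,n)=1$ it gives $\gcd(2s,n) = 1$, so $d = q-1$, and the image is exactly the norm-one subgroup $\{c \in \F_{q^n}^* : \NNN(c) = 1\}$.

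\textbf{Norm computation.} The third step is a brief manipulation showing that the right-hand side of the displayed equation has norm equal to $\NNN(\delta)$. Expanding $\NNN(\lambda - \lambda^{q^{n-s}}) = \prod_{i=0}^{n-1}(\lambda^{q^i} - \lambda^{q^{i-s}})$ and reindexing via $j \equiv i - s \pmod n$ (using $\lambda^{q^n} = \lambda$) identifies it with $\NNN(\lambda^{q^s} - \lambda)$, so the ratio on the right is norm-one. Combining with the previous step, the equation has a solution $x \in \F_{q^n}^*$ (for any, equivalently for some, $\lambda \notin \F_q$) if and only if $\NNN(\delta) = 1$. This is precisely the contrapositive of the claim: $L$ is scattered iff $\NNN(\delta) \neq 1$.

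\textbf{Main obstacle.} The delicate point is that the whole argument relies on $n$ being odd, since this is what makes $\gcd(2s, n) = 1$ and forces the image of $x \mapsto x^{q^{n-s}-q^s}$ to coincide with the norm-one subgroup. For even $n$ one gets $\gcd(2s, n) = 2$ instead, so $d = q^2 - 1$: the image is smaller and the norm obstruction alone no longer characterizes solvability. This is exactly why the authors restrict to odd $n$ here, and also why new constructions of scattered sets can still appear in $\PG(1,q^6)$ in the subsequent section.
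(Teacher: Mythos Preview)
Your argument is correct and in fact proves both directions at once, whereas the paper only proves that $\NNN(\delta)=1$ forces $L$ to be non-scattered, citing Sheekey for the converse. The two proofs share the same pivotal observation---that for odd $n$ with $(s,n)=1$ one has $\gcd(2s,n)=1$, so a suitable $(q^{2s}-1)$-type power map on $\F_{q^n}^*$ surjects onto the norm-one subgroup---but package it differently. The paper works with the family $A'=\{\delta^{-1}x+\alpha x^{q^s}+x^{q^{2s}}\}$, observes that pre-composition with $\xi x$ (for $\NNN(\xi)=1$) and renormalisation sweeps the constant coefficient over all of $\delta^{-1}\cdot\{\text{norm one}\}$, and then, assuming $\NNN(\delta)=1$, exhibits an explicit Moore-determinant trinomial $\gamma x+\beta x^{q^s}+x^{q^{2s}}$ with $\NNN(\gamma)=1$ and $q^2$ roots to reach a contradiction. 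Your route is more direct: you eliminate the weight parameter $y$ to obtain a single equation $x^{q^{n-s}-q^s}=\delta\cdot(\lambda^{q^s}-\lambda)/(\lambda-\lambda^{q^{n-s}})$, identify the image of the left-hand power map as the norm-one subgroup, and check that the right-hand side has norm exactly $\NNN(\delta)$ for every $\lambda\notin\F_q$. This is self-contained (no appeal to \cite{Sh}) and avoids constructing an explicit polynomial with $q^2$ roots; the paper's approach, on the other hand, makes the orbit structure under composition more visible. One small remark: when $\delta=0$ your displayed quotient is not literally well-defined, but the preceding equation $0=(\lambda-\lambda^{q^{n-s}})x^{q^{n-s}}$ is already unsolvable, consistent with $\NNN(0)=0\neq1$.
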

\begin{proof}
We only have to prove that if $\N(\delta)= 1$, then $L$ cannot be scattered.
The linear set $L$ is scattered if and only if
in the following set of polynomials
\[ A=\{\alpha x+\delta x^{q^s} + x^{q^{n-s}} \colon \alpha\in \F_{q^n}\}  \]
there are no polynomials with more than $q$ roots, for otherwise there would be a point
$\langle(1,-\alpha)\rangle_{\F_{q^n}}$ of weight greater than one.
Equivalently in the following set of polynomials
\[ A'=\{f_{\alpha}(x)=\delta^{-1}x+\alpha x^{q^{s}}+ x^{q^{2s}} \colon \alpha\in \F_{q^n}\}  \]
there are no polynomials with more than $q$ roots.
For any $\xi \in \F_{q^n}^*$ with $\N(\xi)=1$, the polynomial
\begin{equation}\label{polform}
\frac{f_{\alpha}(x) \circ \xi x}{\xi^{q^{2s}}}=\delta^{-1} \xi^{1-q^{2s}}x+\alpha \xi^{q^s-q^{2s}}x^{q^s}+x^{q^{2s}}
\end{equation}
has the same number of roots of $f_{\alpha}(x)$.
Note that since $n$ is odd, for any $m\in\Fqn$ such that
$\NNN(m)=1$ there is $\xi\in\Fqn$ such that $m=\xi^{1-q^{2s}}$.
Taking into account $\NNN(\delta)=1$,
this implies that for any polynomial of the form $P(x)=\gamma x+\beta x^{q^s}+x^{q^{2s}}$,
with $\gamma,\beta\in\Fqn$ and $\N(\gamma)=1$, there are $\alpha$ and $\xi\in\Fqn$ such that
\eqref{polform} coincides with $P(x)$.
This is a contradiction, since there exist polynomials
of type $\gamma x+\beta x^{q^s}+x^{q^{2s}}$, $\NNN(\gamma)=1$, with $q^2$ roots, e.g.
\[ \frac{1}{u^{q^s}v^{q^{2s}}-u^{q^{2s}}v^{q^s}}\det \left( \begin{array}{cccccc} x & x^{q^s} & x^{q^{2s}} \\ u & u^{q^s} & u^{q^{2s}} \\ v & v^{q^s} & v^{q^{2s}} \end{array}\right) \]
where $u,v \in \F_{q^n}$ are $\F_q$-linearly independent.
\end{proof}

The previous lemma was already proved for $n=4$ in \cite{CsZ2018} and for $s=1$ in \cite{BartoliZhou}.

\begin{theorem}\label{charact2}
Let $\Gamma$ be a subspace of $\PG(n-1,q^n)$, $n$ odd, of dimension $n-3\geq 2$, and
$\Sigma\cong\PG(n-1,q)$ a canonical subgeometry of $\PG(n-1,q^n)$, such that
$\Gamma \cap \Sigma=\emptyset$.
Assume that a generator $\sigma$ exists of the subgroup of $\mathrm{P}\Gamma\mathrm{L}(n,q^n)$ fixing $\Sigma$ pointwise, such that $\mathrm{intn}_{\sigma}(\Gamma)=2$.
Then there exists a point $R\in\PG(n-1,q^n)$ such that
\[ R^{\sigma^2},R^{\sigma^3},\ldots,R^{\sigma^{n-2}}\in\Gamma. \]
Furthermore assume that $\la R^\sigma,R^{\sigma^{n-1}}\ra$ and $\Gamma$ meet in a point $Q$ and
$R^\sigma\neq Q\neq R^{\sigma^{n-1}}$.
Let $Q'$ be the point such that the pair $\{R^\sigma,R^{\sigma^{n-1}}\}$ separates $\{Q,Q'\}$
harmonically.
Such $Q'$ is defined by the property that there are two representative vectors $v_0$ and $v_1$ for
$R^\sigma$ and $R^{\sigma^{n-1}}$, respectively, such that $\la v_0+v_1\ra_{\Fqn}=Q$,
$\la v_0-v_1\ra_{\Fqn}=Q'$.
Under the assumptions above, the linear set $L=\p_{\Gamma,\Lambda}(\Sigma)$, with $\Lambda$ a line disjoint from $\Gamma$,
is a maximum scattered linear set of LP-type
if and only if
\begin{equation}\label{strana}
\Sigma\cap\la R,R^{\sigma^2},R^{\sigma^3},\ldots,R^{\sigma^{n-2}},Q'\ra=\emptyset.
\end{equation}
\end{theorem}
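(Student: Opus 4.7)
The plan is to mirror the coordinate normalization of the proof of Theorem \ref{charact1}, read off a distinguished $\delta\in\Fqn^*$ directly from $Q$, and then reinterpret both Lemma \ref{norm} and \eqref{strana} as the same condition on the norm of $\delta$.

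First, since the property on the left-hand side is $\mathrm{P}\Gamma\mathrm{L}(2,q^n)$-invariant, I am free to choose $\Lambda=\la R,R^{\sigma^{n-1}}\ra$. Proceeding exactly as in the proof of Theorem \ref{charact1}, after conjugating by an element of the pointwise stabilizer of $\Sigma$ that commutes with $\sigma=\hat\sigma^s$, I may assume $R=\la\mathbf e_0\ra_{\Fqn}$, whence $R^{\sigma^i}=\la\mathbf e_{is\bmod n}\ra_{\Fqn}$. The hypothesis that $Q$ is distinct from $R^\sigma$ and $R^{\sigma^{n-1}}$ provides a unique $\delta\in\Fqn^*$ with
\[
Q=\la\mathbf e_s-\delta\mathbf e_{(n-1)s}\ra_{\Fqn},
\]
and the computation in the proof of Theorem \ref{charact1} identifies $L=\p_{\Gamma,\Lambda}(\Sigma)$ with $L^n_{s,\delta}$. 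In view of Lemma \ref{norm}, this reduces the theorem to proving the equivalence between \eqref{strana} and $\NNN(\delta)\neq 1$.

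Next, I extract $Q'$: taking $v_0=\mathbf e_s$ and $v_1=-\delta\mathbf e_{(n-1)s}$ gives $\la v_0+v_1\ra_{\Fqn}=Q$, whence $Q'=\la\mathbf e_s+\delta\mathbf e_{(n-1)s}\ra_{\Fqn}$. Since $(s,n)=1$, the set $\{0,2s,3s,\ldots,(n-2)s\}\bmod n$ equals $\mathbb Z/n\mathbb Z\setminus\{s,(n-1)s\}$, and therefore
\[
M:=\la R,R^{\sigma^2},R^{\sigma^3},\ldots,R^{\sigma^{n-2}},Q'\ra
\]
is the hyperplane of equation $x_{(n-1)s}=\delta\,x_s$. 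A point $\la(y,y^q,\ldots,y^{q^{n-1}})\ra_{\Fqn}\in\Sigma$ lies on $M$ if and only if $\delta=y^{q^{(n-1)s}-q^s}$, so $M\cap\Sigma\neq\emptyset$ precisely when $\delta$ is in the image of $\varphi\colon y\mapsto y^{q^{(n-1)s}-q^s}$ on $\Fqn^*$.

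To finish, I identify $\mathrm{Im}\,\varphi$. Its kernel consists of the fixed points of $y\mapsto y^{q^{2s}}$; as $n$ is odd and $(s,n)=1$ one has $(2s,n)=1$, so the fixed field is $\Fq$. Thus $|\mathrm{Im}\,\varphi|=(q^n-1)/(q-1)=|\ker\NNN_{\Fqn/\Fq}|$, and since $\NNN(y^{q^{(n-1)s}-q^s})=\NNN(y)^{q^{(n-1)s}-q^s}=1$ (because $\NNN(y)\in\Fq$), I obtain $\mathrm{Im}\,\varphi=\ker\NNN_{\Fqn/\Fq}$. Hence $\Sigma\cap M=\emptyset\Leftrightarrow\NNN(\delta)\neq 1$, which together with the reduction of the second paragraph concludes the proof. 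The principal difficulty I foresee is essentially clerical: verifying that $i\mapsto is\bmod n$ omits exactly the indices $\{s,(n-1)s\}$, that adjoining $Q'$ produces the stated hyperplane equation, and that the image of $\varphi$ fills the whole of $\ker\NNN$ (which is where the assumption $n$ odd enters, in agreement with Lemma \ref{norm}).
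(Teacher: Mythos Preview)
Your proof is correct and follows essentially the same route as the paper's: normalize coordinates as in Theorem~\ref{charact1}, read off $\delta$ from $Q$, compute $Q'$ and the hyperplane $M$, and reduce both sides of the equivalence to the condition $\NNN(\delta)\neq1$ via Lemma~\ref{norm}. The only cosmetic difference is that you justify the step ``$\delta=u^{q^{(n-1)s}-q^s}$ has a solution iff $\NNN(\delta)=1$'' by an explicit kernel/image count (using $(2s,n)=1$), whereas the paper phrases it via $(s(n-2),n)=1$ and leaves the norm-image identification implicit; these are the same argument.
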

\begin{proof}
As in Theorem \ref{charact1} it may be assumed that the coordinates of $Q$ are
$x_s=1$, $x_{s(n-1)}=-\delta\in\F_{q^n}^*$, $x_i=0$ otherwise.
The coordinates of $Q'$ are
$x_s=1$, $x_{s(n-1)}=\delta\in\F_{q^n}^*$, $x_i=0$ otherwise.
The span $W=\la R,R^{\sigma^2},R^{\sigma^3},\ldots,R^{\sigma^{n-2}},Q'\ra$ and $R^\sigma$
are complementary subspaces of $\PG(n-1,q^n)$.
So, $W$ is a hyperplane and its equation is $-\delta x_s+x_{s(n-1)}=0$.
A point $\la(u,u^q,\ldots,u^{q(n-1)})\ra_{\F_{q^n}}$ of $\Sigma$ belongs to $W$ if and only if
$-\delta u^{q^s}+u^{q^{s(n-1)}}=0$, equivalent to
\begin{equation}\label{menodelta}
\delta=u^{q^s(q^{s(n-2)}-1)}.
\end{equation}
Since $n$ is odd, $s(n-2)$ is coprime with $n$.
This implies that an $u\in\F_{q^n}^*$ exists satisfying (\ref{menodelta}) if and only if
$\N(\delta)=1$, which is a contradiction because of Lemma \ref{norm}.
\end{proof}

\section{New constructions}\label{newconstruction}

In this section we will deal with the following family of linear sets
\[ \mathcal{L}:=\{ \langle (x,x^q-x^{q^2}+x^{q^4}+x^{q^5})\rangle_{\F_{q^6}} \colon x \in \F_{q^6}^* \} \subseteq \PG(1,q^6),\quad q \equiv 1 \pmod{4}. \]
We will show that for some choices of $q$ we may get new examples of maximum scattered linear sets.
This family of linear sets can be obtained by projecting the canonical subgeometry $\Sigma=\{ \langle(x,x^q,x^{q^2},x^{q^3},x^{q^4},x^{q^5})\rangle_{\F_{q^6}} \colon x \in \F_{q^6}^* \}$ from
\[ \Gamma \colon \left\{ \begin{array}{ll} x_0=0 \\ x_5=-x_4-x_1+x_2 \end{array} \right. \]
to
\[ \Lambda \colon \left\{ \begin{array}{llll} x_1=0\\ x_2=0\\ x_3=0\\ x_4=0. \end{array} \right. \]
Let us consider $\sigma\in \mathrm{P}\Gamma\mathrm{L}(6,q^6)$ defined as
\[\left(\la(x_0,x_1,x_2,x_2,x_4,x_5)\ra_{\F_{q^6}}\right)^\sigma=
\la(x_5^q,x_0^q,x_1^q,x_2^q,x_3^q,x_4^q)\ra_{\F_{q^6}}\]
and $\overline{\sigma}:=\sigma^5$, which are the two generators of the subgroup of $\mathrm{P}\Gamma\mathrm{L}(6,q^6)$ fixing $\Sigma$ pointwise.
Then
\[ \Gamma^{\sigma} \colon \left\{ \begin{array}{ll} x_1=0 \\ x_0=-x_5-x_2+x_3 \end{array} \right. \,\,\, \text{and} \,\,\, \Gamma^{\sigma^2} \colon \left\{ \begin{array}{ll} x_2=0 \\ x_1=-x_0-x_3+x_4. \end{array} \right. \]
Therefore,
\[ \Gamma\cap\Gamma^\sigma \colon \left\{ \begin{array}{llll} x_0=0\\ x_1=0 \\ x_4=2x_2-x_3 \\ x_5=-x_2+x_3 \end{array} \right. \,\,\, \text{and} \quad \Gamma\cap\Gamma^\sigma\cap\Gamma^{\sigma^2} =\emptyset. \]
Hence, $\dim_{\F_{q^6}} (\Gamma \cap \Gamma^\sigma)=1$ and
since $q$ is odd $\dim_{\F_{q^6}} (\Gamma \cap \Gamma^\sigma\cap \Gamma^{\sigma^2})= -1$.
Since $\Gamma \cap \Gamma^{\overline{\sigma}}=(\Gamma\cap\Gamma^\sigma)^{\sigma^5}$ and $\Gamma \cap \Gamma^{\overline{\sigma}}\cap \Gamma^{\overline{\sigma}^2} =(\Gamma\cap\Gamma^\sigma\cap\Gamma^{\sigma^2})^{\sigma^4}$, we have that
$\dim_{\F_{q^6}} (\Gamma \cap \Gamma^{\overline{\sigma}})=1$ and $\dim_{\F_{q^6}} (\Gamma \cap \Gamma^{\overline{\sigma}}\cap \Gamma^{\overline{\sigma}^2})= -1$.
Therefore,
\[ \mathrm{intn}_{\sigma}(\Gamma)= \mathrm{intn}_{\overline{\sigma}}(\Gamma)= 3. \]
This implies the non-equivalence of $\mathcal L$
with the linear set of pseudoregulus type and also it cannot be of LP-type because of Theorem \ref{charact3}.

Computational results show that for $q \equiv 1 \pmod{4}$ the linear set $\mathcal{L}$ is maximum scattered for $q\leq 29$.
We show that for $q\leq 17$ and $q \not\equiv 0 \pmod{5}$ it is also new.
For $q\equiv 0 \pmod{5}$ we will prove in Proposition \ref{trin0}
that $\mathcal{L}$ is equivalent to the linear set defined in \cite{CsMZ2018}.

\subsection{Known examples of maximum scattered linear sets in $\PG(1,q^6)$}\label{EquivIssue}

In order to decide whether the linear set $\mathcal{L}$ is new, we describe the known maximum scattered linear sets in $\PG(1,q^6)$.

We start by listing the non-equivalent (under the action of $\Gamma\mathrm{L}(2,q^6)$) maximum scattered subspaces of $\F_{q^6}^2$, i.e.\ subspaces defining maximum scattered linear sets.

\begin{example}\label{exKnownscattered}
\begin{enumerate}
\item $U^{1}:= \{(x,x^{q}) \colon x\in \F_{q^6}\}$, see \cite{BL2000,CsZ20162};
\item $U^{2}_{\delta}:= \{(x,\delta x^{q} + x^{q^{5}})\colon x\in \F_{q^6}\}$, $\N_{q^6/q}(\delta)\notin \{0,1\}$ \footnote{This condition  implies $q\neq 2$.}, see \cite{LP2001,LTZ,Sh};
\item $U^{3}_{\delta}:= \{(x,\delta x^{q}+x^{q^{4}})\colon x\in \F_{q^{6}}\}$, $\N_{q^6/q^{3}}(\delta) \notin \{0,1\}$, satisfying further conditions on $\delta$ and $q$, see \cite[Theorems 7.1 and 7.2]{CMPZ} \footnote{Also here $q>2$, otherwise it is not scattered.};
\item $U^{4}_{\delta}:=\{(x, x^q+x^{q^3}+\delta x^{q^5}) \colon x \in \F_{q^6}\}$, $q$ odd and $\delta^2+\delta=1$, see \cite{CsMZ2018,MMZ}.
\end{enumerate}
\end{example}

In order to simplify the notation, we will denote by $L^1$ and $L^{i}_{\delta}$ the $\F_q$-linear set defined by $U^{1}$ and $U^{i}_{\delta}$, respectively.
Therefore, $L_\delta^2=L_{s,\delta}^6$ as defined in \eqref{LPform}.
We will also use the following notation: $\mathcal{U}:=U_{x^q-x^{q^2}+x^{q^4}+x^{q^5}}$.

In \cite[Propositions 3.1, 4.1 \& 5.5]{CsMZ2018} the following result has been proved.

\begin{lemma}\label{equiv}
Let $L_f$ be one of the maximum scattered of $\PG(1,q^6)$ listed before.
Then a linear set $L_U$ of $\PG(1,q^6)$ is $\mathrm{P}\Gamma\mathrm{L}$-equivalent to $L_f$ if and only if $U$ is $\Gamma\mathrm{L}$-equivalent either to $U_f$ or to $U_{\hat{f}}$.
Furthermore, the linear set $L^3_{\delta}$ is simple.
\end{lemma}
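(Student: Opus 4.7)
The lemma splits naturally into four cases, one per family in Example~\ref{exKnownscattered}. The ``if'' direction is almost immediate: $\Gamma\mathrm{L}$-equivalent subspaces define $\mathrm{P}\Gamma\mathrm{L}$-equivalent linear sets, and the non-degenerate symmetric form $\langle x,y\rangle=\Tr_{q^6/q}(xy)$ identifies $U_{\hat f}$ with the orthogonal complement $U_f^\perp$, which is well known to yield a projectivity $L_{U_f}\sim L_{U_{\hat f}}$ (this is the same duality that swaps the left and right idealisers of the associated MRD code). So the content is the ``only if'' direction together with the sharpening that $L^3_\delta$ is simple.

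The case $L_f=L^2_\delta$ is precisely Theorem~\ref{classLP}. For the pseudoregulus $L^1$, with $f(x)=x^q$ one has $\hat f(x)=x^{q^5}$, and the $\F_{q^6}$-linear swap $(x,y)\mapsto(y,x)$ sends $U^1$ to $U_{\hat f}$; so it suffices to prove that $L^1$ is simple, which I would do via Theorem~\ref{chPseudo}: any $(n-3)$-vertex projecting to a pseudoregulus-type linear set is characterised by $\dim(\Gamma\cap\Gamma^{\hat\sigma})=n-4$ together with the non-degeneracy condition, and a direct orbit computation of such vertices under the stabilizer of $\Sigma$ produces a single orbit, so Theorem~\ref{GLclassGeom} gives class one. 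For $L^3_\delta$, where $f(x)=\delta x^q+x^{q^4}$ and $\hat f(x)=\delta^{q^5}x^{q^5}+x^{q^2}$, I would first exhibit an explicit $\phi\in\Gamma\mathrm{L}(2,q^6)$ sending $U_f$ to $U_{\hat f}$, exploiting the $q^3$-symmetry of the exponents of $f$: a natural candidate has the shape $(x,y)\mapsto(\alpha x^{q^2},\beta y^{q^2})$, with parameters $\alpha,\beta\in\F_{q^6}^*$ pinned down by the matching condition $\beta f(x)^{q^2}=\hat f(\alpha x^{q^2})$. For $L^4_\delta$ the analogous computation applies; here $\hat f(x)=\delta^{q^5}x^{q^5}+x^{q^3}+x^q$ and $(\delta^{q^5})^2+\delta^{q^5}=1$, so $\hat f$ lies in the same family with parameter $\delta^{q^5}$, and one obtains at most two $\Gamma\mathrm{L}$-orbits.

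The main obstacle is the enumeration of \emph{all} $q$-polynomials $h$ with $L_h=L_f$ up to the action $h(x)\mapsto a\,h(b\,\cdot)$ and field automorphisms coming from $\Gamma\mathrm{L}(2,q^6)$, with the conclusion that the only orbits are that of $U_f$ (Families 1 and 3) or those of $U_f$ and $U_{\hat f}$ (Families 2 and 4). The condition $L_h=L_f$ translates, via the Dickson-matrix criterion for equality of linear sets, into a finite algebraic system on the coefficients of $h$; solving it requires careful use of the arithmetic of $\delta$ (namely $\N_{q^6/q^3}(\delta)\notin\{0,1\}$ in Family 3 and $\delta^2+\delta=1$ in Family 4) to rule out sporadic solutions. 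This bookkeeping concentrates the technical content of the proofs in \cite{CsMZ2018}, and is the step I expect to be the most delicate.
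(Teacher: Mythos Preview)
The paper offers no proof of this lemma: the sentence introducing it reads ``In \cite[Propositions 3.1, 4.1 \& 5.5]{CsMZ2018} the following result has been proved'', and no argument follows. So there is nothing in the paper to compare your proposal against beyond the bare citation.

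That said, your plan is a reasonable outline of how the cited results are organised. The ``if'' direction is indeed immediate once one observes that $L_{U_f}=L_{U_{\hat f}}$ as point sets (a $q$-polynomial and its adjoint have the same rank, hence every point has equal weight in both linear sets). For the ``only if'' direction you correctly isolate the enumeration of all $U$ with $L_U=L_f$ as the core difficulty and correctly locate it in \cite{CsMZ2018}. One caveat: the phrase ``Dickson-matrix criterion for equality of linear sets'' overstates what is actually available; there is no single rank condition that decides $L_h=L_f$. In practice the cited proofs proceed much as Proposition~\ref{trin0} of the present paper does, by writing out the $\Gamma\mathrm{L}$-equivalence condition as a coefficient-by-coefficient polynomial identity and then exploiting the arithmetic constraints on $\delta$ to force the solutions into the two expected orbits. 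Your route to the simplicity of $L^1$ via Theorems~\ref{chPseudo} and~\ref{GLclassGeom} is valid but more elaborate than the direct subspace argument in \cite{CSZ2015}.
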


The previous lemma includes results on linear sets of LP-type.

\begin{remark}\label{closedadjoint}
If $U_f$ is an $\Fq$-subspace of type 1. or 2. above, then $U_{\hat f}$ and $U_f$ are
$\Gamma\mathrm{L}$-equivalent.
By Lemma \ref{equiv}, this holds also for $\Fq$-subspaces of type 3.

\end{remark}

\subsection{The linear set $\mathcal{L}$}

Here we deal with the equivalence issue between the linear sets defined by Example \ref{exKnownscattered} and the linear set $\mathcal{L}$.
As already noted, we just have to check the equivalence with the linear sets $L^3_{\delta}$ and with $L^4_{\delta}$ defined by the subspaces 3. and 4. in Example \ref{exKnownscattered}, because of the construction of $\mathcal{L}$ and Theorems \ref{chPseudo} and \ref{charact3}.

\begin{proposition}
The linear set $\mathcal{L}$ is not $\mathrm{P}\Gamma\mathrm{L}$-equivalent to $L^3_{\delta}$.
\end{proposition}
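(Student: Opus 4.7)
The plan is to argue by contradiction, reducing the claimed $\mathrm{P}\Gamma\mathrm{L}$-equivalence to a polynomial identity in one variable and eliminating it by a coefficient comparison in odd characteristic (which is available since $q\equiv 1\pmod 4$).

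Set $g(x):=\delta x^{q}+x^{q^{4}}$, so that $U^{3}_{\delta}=U_{g}$. By Lemma \ref{equiv}, a $\mathrm{P}\Gamma\mathrm{L}$-equivalence $\mathcal L\sim L^{3}_{\delta}$ would force $\mathcal U$ to be $\Gamma\mathrm{L}(2,q^{6})$-equivalent to $U_{g}$ or to $U_{\hat g}$, and by Remark \ref{closedadjoint} these two sit in one $\Gamma\mathrm{L}$-orbit; hence it suffices to rule out the existence of $\phi\in\Gamma\mathrm{L}(2,q^{6})$ with $\phi(\mathcal U)=U_{g}$. Since the polynomial $f(x)=x^{q}-x^{q^{2}}+x^{q^{4}}+x^{q^{5}}$ defining $\mathcal U$ has all coefficients in the prime field, one has $f(y)^{\rho}=f(y^{\rho})$ for every $\rho\in\mathrm{Aut}(\F_{q^{6}})$; writing $\phi(v)=Av^{\rho}$, the Galois part is absorbed by the substitution $y\mapsto y^{\rho}$, and it remains to show that no
\[
A=\begin{pmatrix} a & b \\ c & d \end{pmatrix}\in\GL(2,q^{6})
\]
satisfies $A\cdot\mathcal U=U_{g}$.

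Such an $A$ would make $y\mapsto ay+bf(y)$ an $\F_{q}$-linear bijection of $\F_{q^{6}}$ and force the identity
\[
cy+df(y)\;=\;\delta(ay+bf(y))^{q}+(ay+bf(y))^{q^{4}}\qquad\forall\,y\in\F_{q^{6}}.
\]
The next step is to expand $f(y)^{q}$ and $f(y)^{q^{4}}$ explicitly (reducing exponents mod $6$) and compare the coefficients of $y^{q^{i}}$ on the two sides for $i=0,\ldots,5$, producing six linear conditions on $a,b^{q},b^{q^{4}},c,d$. The decisive observation I expect to exploit is that the $y^{q^{2}}$- and $y^{q^{5}}$-coefficients on the right-hand side turn out to be the \emph{same} expression $\delta b^{q}+b^{q^{4}}$, while on the left they are $-d$ and $d$; this forces $d=0$ since $q$ is odd. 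The $y^{q^{3}}$-comparison will give $b^{q^{4}}=\delta b^{q}$, and combined with $d=0$ this yields $2\delta b^{q}=0$, hence $b=0$ (using $\delta\ne 0$). The remaining equations at $y^{q^{0}}$, $y^{q}$ and $y^{q^{4}}$ then cascade to $a=c=0$, contradicting $\det A\ne 0$.

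The main obstacle is organizational rather than conceptual: carefully tabulating the six coefficient equations after writing $f(y)^{q}$ and $f(y)^{q^{4}}$ in reduced form, with the correct signs. The conceptual key that makes the argument clean is the reduction from $\Gamma\mathrm{L}$-equivalence to $\GL$-equivalence afforded by the prime-field coefficients of $f$; without this one would have to simultaneously juggle Galois twists of both $\mathcal U$ and $U_g$.
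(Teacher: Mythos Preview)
Your proposal is correct and follows essentially the same route as the paper: reduce via Lemma~\ref{equiv} (and the simplicity of $L^3_\delta$) to a single $\GL$-equivalence, expand as a $q$-polynomial identity, and compare the six coefficients to obtain exactly the system~(\ref{binZ}). The only cosmetic difference is the order of elimination---you add the $y^{q^{2}}$- and $y^{q^{5}}$-equations to get $2d=0$ first, whereas the paper combines $d=\delta a^{q}$ with $d=a^{q^{4}}$ to force $\N_{q^{6}/q^{3}}(\delta)=1$ when $a\neq 0$; both paths end at $a=b=0$ and the desired contradiction.
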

\begin{proof}
By Lemma \ref{equiv}, we have to check whether $\mathcal{U}$ and $U^3_{\delta}$ are $\Gamma\mathrm{L}$-equivalent, with $\N_{q^6/q^{3}}(\delta) \notin \{0,1\}$.
Suppose that there exist $\sigma \in \mathrm{Aut}(\F_{q^6})$ and an invertible matrix
$\left( \begin{array}{llrr} a & b \\ c & d \end{array} \right)$ such that for each $x \in \F_{q^6}$ there exists $z \in \F_{q^6}$ satisfying  \[ \left(
\begin{array}{llrr}
a & b\\
c & d
\end{array} \right)
\left(
\begin{array}{ccc}
\hspace{1cm}x^\sigma\\
x^{\sigma q}-x^{\sigma q^2}+x^{\sigma q^4}+x^{\sigma q^5}
\end{array} \right)
=\begin{pmatrix} z\\
{\delta z^q+ z^{q^4}} \end{pmatrix}.\]
Equivalently, for each $x \in \F_{q^6}$ we have
\[cx^\sigma+d(x^{\sigma q}-x^{\sigma q^2}+x^{\sigma q^4}+x^{\sigma q^5})=\delta [a^qx^{\sigma q}+\]
\[ +b^q(x^{\sigma q^2}-x^{\sigma q^3}+x^{\sigma q^5}+x^\sigma)]+a^{q^4}x^{\sigma q^4}+b^{q^4}(x^{\sigma q^5}-x^\sigma+x^{\sigma q^2}+x^{\sigma q^3}). \]
This is a polynomial identity in $x^{\sigma}$ and hence we have the following relations:
\begin{equation}\label{binZ}
\left\{
\begin{array}{llllll}
c=\delta b^q- b^{q^4}\\
d=\delta a^q\\
-d=\delta b^q+b^{q^4}\\
0=-\delta b^q+ b^{q^4}\\
d= a^{q^4}\\
d=\delta b^q+ b^{q^4}.
\end{array}\right.
\end{equation}

From the second and the fifth equations, if $a \neq 0$ then $\delta=(a^q)^{q^3-1}$ and so $\N_{q^6/q^3}(\delta)=1$, which is not possible.
So $a=0$ and then $d=0$. Hence we have $\delta b^q+b^{q^4}=0$ and $-\delta b^q+b^{q^4}=0$, from which we get $b=0$, which is not possible.
Therefore, $\mathcal{L}$ is not equivalent to $L^3_{\delta}$.
\end{proof}

\begin{proposition}\label{trin0}
The linear set $\mathcal{L}$ is $\mathrm{P}\Gamma\mathrm{L}$-equivalent to $L^4_{\delta}$, for $q$ odd and $\delta^2+\delta=1$, if and only if
there exist $a,b,c,d \in \F_{q^6}$ such that $ad-bc \neq 0$ and either
\begin{equation}\label{trin}
\left\{
\begin{array}{llllll}
c=b^q+\delta b^{q^5}\\
d=a^q+b^{q^3}-\delta b^{q^5}\\
-d=b^q+b^{q^3}\\
0=-b^q+a^{q^3}+\delta b^{q^5}\\
d=b^{q^3}+\delta b^{q^5}\\
d=b^q-b^{q^3}+\delta a^{q^5},
\end{array}\right.
\end{equation}
or
\begin{equation}\label{trin2}
\left\{
\begin{array}{llllll}
c=\delta b^q+ b^{q^5}\\
d=\delta a^q+b^{q^3}- b^{q^5}\\
-d=\delta b^q+b^{q^3}\\
0=-\delta b^q+a^{q^3}+ b^{q^5}\\
d=b^{q^3}+ b^{q^5}\\
d=\delta b^q-b^{q^3}+ a^{q^5}.
\end{array}\right.
\end{equation}
In particular, when $q \equiv 0 \pmod{5}$ the linear set $\mathcal{L}$ is $\mathrm{P}\Gamma\mathrm{L}$-equivalent to $L^4_{2}$.
\end{proposition}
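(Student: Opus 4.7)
The plan is to apply Lemma \ref{equiv} and translate the two resulting $\Gamma\mathrm{L}$-equivalences into the explicit systems \eqref{trin} and \eqref{trin2}. A key observation is that the polynomial $f(x) = x^q - x^{q^2} + x^{q^4} + x^{q^5}$ defining $\mathcal{U}$ has all its coefficients in the prime field, so $\mathcal{U}^\sigma = \mathcal{U}$ for every $\sigma \in \mathrm{Aut}(\F_{q^6})$. Consequently $\Gamma\mathrm{L}$-equivalence of $\mathcal{U}$ with any other subspace reduces to ordinary $\mathrm{GL}_2(\F_{q^6})$-equivalence. By Lemma \ref{equiv} this means that $\mathcal{L}$ is $\mathrm{P}\Gamma\mathrm{L}$-equivalent to $L^4_\delta$ exactly when there exists $M \in \mathrm{GL}_2(\F_{q^6})$ with $M\mathcal{U}$ equal to $U_g$ or to $U_{\hat g}$, where $g(x) = x^q + x^{q^3} + \delta x^{q^5}$ and $\hat g(x) = \delta^q x^q + x^{q^3} + x^{q^5}$.

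I would then set up a generic matrix identity. Writing
\[ \left(\begin{array}{cc} a & b \\ c & d \end{array}\right) \left(\begin{array}{c} y \\ f(y) \end{array}\right) = \left(\begin{array}{c} z \\ h(z) \end{array}\right), \qquad h(x) = \mu x^q + x^{q^3} + \nu x^{q^5}, \]
and requiring it to hold for every $y \in \F_{q^6}$, one substitutes $z = a y + b f(y)$, expands $z^q$, $z^{q^3}$, $z^{q^5}$ by means of $y^{q^6} = y$, and equates the coefficients of $y^{q^i}$ ($i = 0,\ldots,5$) with those of $cy + df(y)$. This produces six polynomial identities in $a,b,c,d$. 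The choice $(\mu,\nu) = (1,\delta)$ reproduces \eqref{trin} exactly and corresponds to $M\mathcal{U} = U_g$; the choice $(\mu,\nu) = (\delta^q,1)$ reproduces an analogue of \eqref{trin2} with $\delta$ replaced everywhere by $\delta^q$, and corresponds to $M\mathcal{U} = U_{\hat g}$. The substitution $(a,b,c,d) \mapsto (a^{q^5}, b^{q^5}, c^{q^5}, d^{q^5})$ sends the $(\delta^q,1)$-system bijectively onto \eqref{trin2}: it shifts every Frobenius exponent by $q^5$ and, crucially, turns $\delta^q$ into $\delta^{q^6} = \delta$. The condition $ad-bc \neq 0$ is preserved since $(ad-bc)^{q^5}$ vanishes iff $ad-bc$ does. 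This establishes the biconditional.

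For the special case $q \equiv 0 \pmod 5$, the scalar $\delta = 2$ satisfies $2^2 + 2 = 6 \equiv 1 \pmod 5$. Substituting $a = 1$, $b = 1$, $c = 3$, $d = 2$ into \eqref{trin2} with $\delta = 2$, one verifies in characteristic $5$ each of the six equations, while $ad - bc = 2 - 3 = -1 \neq 0$. Hence $\mathcal{L}$ is $\mathrm{P}\Gamma\mathrm{L}$-equivalent to $L^4_2$.

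The main technical point is the matching of \eqref{trin2} with the adjoint form $\hat g$: the adjoint naturally involves the scalar $\delta^q$ rather than $\delta$, so one has to verify that the Frobenius renormalization of $(a,b,c,d)$ described above really converts one system into the other. Once this is in place the remaining steps are a routine coefficient comparison and the explicit verification in characteristic $5$.
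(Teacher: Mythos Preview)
Your approach is correct and essentially follows the paper's line: invoke Lemma~\ref{equiv}, set up the matrix equation, and compare coefficients of $y^{q^i}$ to obtain the six-equation systems. There are two minor differences worth noting. First, you make explicit use of the fact that the coefficients of $f$ lie in the prime field, so $\mathcal U^\sigma=\mathcal U$ and $\Gamma\mathrm L$-equivalence collapses to $\mathrm{GL}_2$-equivalence; the paper keeps $\sigma$ in the setup but it is inert in the resulting polynomial identity, so this amounts to the same thing. Second, and more substantively, for the adjoint case the paper works directly with $\delta z^q+z^{q^3}+z^{q^5}$ and obtains \eqref{trin2} in one stroke, silently using that $U_{\delta x^q+x^{q^3}+x^{q^5}}$ lies in the $\Gamma\mathrm L$-orbit of $U_{\hat g}$ (apply the Frobenius $x\mapsto x^{q^5}$ to the coefficients). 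You instead compute with the literal adjoint $\hat g(x)=\delta^q x^q+x^{q^3}+x^{q^5}$ and then transfer the extra Frobenius to the unknowns via $(a,b,c,d)\mapsto(a^{q^5},b^{q^5},c^{q^5},d^{q^5})$. These are dual realisations of the same move, and yours has the advantage of making the passage from $\delta^q$ to $\delta$ explicit. For the characteristic-$5$ case you exhibit a solution $(a,b,c,d)=(1,1,3,2)$ of \eqref{trin2}, whereas the paper exhibits $(-1,1,3,3)$ as a solution of \eqref{trin}; both are valid.
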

\begin{proof}
By Lemma \ref{equiv} we have to check whether $\mathcal{U}$ is equivalent either to $U^4_{\delta}$ or to $(U^4_{\delta})^\perp$.
\noindent Suppose that there exist $\sigma \in \mathrm{Aut}(\F_{q^6})$ and an invertible matrix
$\left( \begin{array}{llrr} a & b \\ c & d \end{array} \right)$ such that for each $x \in \F_{q^6}$ there exists $z \in \F_{q^6}$ satisfying  \[ \left(
\begin{array}{llrr}
a & b\\
c & d
\end{array} \right)
\left(
\begin{array}{ccr}
\hspace{1cm}x^\sigma\\
x^{\sigma q}-x^{\sigma q^2}+x^{\sigma q^4}+x^{\sigma q^5}
\end{array} \right)
=\left(
\begin{array}{ccr}
\hspace{1cm}z\\
z^q+z^{q^3}+\delta z^{q^5} \end{array}
\right).\]
Equivalently, for each $x \in \F_{q^6}$ we have
\[ cx^\sigma+d(x^{\sigma q}-x^{\sigma q^2}+x^{\sigma q^4}+x^{\sigma q^5})=a^q x^{\sigma q}+b^q(x^{\sigma q^2}-x^{\sigma q^3}+x^{\sigma q^5}+x^{\sigma})+ \]
\[ +a^{q^3} x^{\sigma q^3}+b^{q^3}(x^{\sigma q^4}-x^{\sigma q^5}+x^{\sigma q}+x^{\sigma q^2})+ \]
\[ +\delta[a^{q^5}x^{\sigma q^5}+b^{q^5}(x^{\sigma}-x^{\sigma q}+x^{\sigma q^3}+x^{\sigma q^4})]. \]
This is a polynomial identity in $x^{\sigma}$ and hence we have the Equations \eqref{trin}.

Now, suppose that there exist $\sigma \in \mathrm{Aut}(\F_{q^6})$ and an invertible matrix
$\left( \begin{array}{llrr} a & b \\ c & d \end{array} \right)$ such that for each $x \in \F_{q^6}$ there exists $z \in \F_{q^6}$ satisfying  \[ \left(
\begin{array}{llrr}
a & b\\
c & d
\end{array} \right)
\left(
\begin{array}{ccr}
\hspace{1cm}x^\sigma\\
x^{\sigma q}-x^{\sigma q^2}+x^{\sigma q^4}+x^{\sigma q^5}
\end{array} \right)
=\left(
\begin{array}{ccr}
\hspace{1cm}z\\
\delta z^q+z^{q^3}+ z^{q^5} \end{array}
\right).\]
As before, we get the Relations \eqref{trin2}.

The second part follows from the fact that for $q \equiv 0 \pmod{5}$,
$\delta=2$,  $a=-1, b=1, c=3$ and $d=3$ satisfy (\ref{trin}).
\end{proof}

Thanks to GAP computations we are able to prove that the Systems \eqref{trin} and \eqref{trin2} have no solutions in $a,b,c$ and $d$ ($ac-bd\neq0$) for $q \leq 17$ and $q \not\equiv 0 \pmod{5}$.
Therefore, we have the following result.

\begin{corollary}\label{new}
If $q\le17$, $q\equiv 1 \pmod4$, $q\neq5$, then $\mathcal{L}$ is a maximum scattered linear set of $\PG(1,q^6)$ not equivalent to any of those listed in Example \ref{exKnownscattered}.
\end{corollary}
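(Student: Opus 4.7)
The plan is to show in turn that $\mathcal L$ is maximum scattered and that it fails to be equivalent to each of the four known families in Example \ref{exKnownscattered}. The scattered property is handled by direct computation in GAP for each $q\in\{9,13,17\}$ (and $q=5$ is excluded): one checks that for every $\alpha\in\F_{q^6}$ the kernel of $\alpha x+x^q-x^{q^2}+x^{q^4}+x^{q^5}$ has $\F_q$-dimension at most $1$, which is explicitly mentioned just above the statement.

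Next I would rule out equivalences with the first two families by appealing to the vertex analysis already carried out for $\mathcal L$. The intersection number of the vertex $\Gamma$ with respect to both generators $\sigma,\bar\sigma$ was computed to be $3$. Since Theorem \ref{chPseudo} forces $\dim(\Gamma\cap\Gamma^{\hat\sigma})=n-4$, that is $\mathrm{intn}=1$, for the pseudoregulus type, and Theorem \ref{charact3} requires $\mathrm{intn}_\sigma(\Gamma)=2$ for LP-type when $n=6$, $\mathcal L$ can be neither $L^1$ nor any $L^2_\delta$. The non-equivalence with $L^3_\delta$ is the content of the proposition immediately preceding the corollary.

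The remaining and only substantive step is to exclude equivalence with $L^4_\delta$ for $\delta$ satisfying $\delta^2+\delta=1$. Here I invoke Lemma \ref{equiv}: $\mathcal L\sim L^4_\delta$ would force $\mathcal U$ to be $\Gamma\mathrm L$-equivalent to either $U^4_\delta$ or $(U^4_\delta)^\perp$, which by Proposition \ref{trin0} is equivalent to the solvability of System \eqref{trin} or System \eqref{trin2} in $a,b,c,d\in\F_{q^6}$ with $ad-bc\neq 0$. I would run a GAP script that, for each $q\in\{9,13,17\}$, picks a root $\delta$ of $T^2+T-1$ in $\F_{q^6}$, parametrizes the systems by eliminating $c$ and $d$ using the first and third equations (so that $d=-b^q-b^{q^3}$ and $c$ is expressed in $b$), and then checks that the resulting polynomial system in $a,b$ over $\F_{q^6}$ has no solution with $ad-bc\neq 0$; the same is done for \eqref{trin2}. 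The hypothesis $q\not\equiv 0\pmod 5$ is exactly what prevents the explicit solution $(a,b,c,d)=(-1,1,3,3)$ of Proposition \ref{trin0} from existing in these characteristics.

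The main obstacle in the plan is confirming that the GAP computation for the two systems in \eqref{trin}--\eqref{trin2} is genuinely exhaustive: $\F_{q^6}$ has up to $17^6$ elements, so a naive brute-force search is infeasible, and one must instead reduce to a Gr\"obner basis computation (or equivalently eliminate $a,c,d$ and study the resulting univariate system in $b$) to be sure no tuple is missed. Once these computations return that the ideals are the unit ideal modulo $ad-bc$, the combination of the four non-equivalences gives the corollary.
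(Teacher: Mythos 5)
Your proposal assembles exactly the ingredients the paper uses: the computational verification of scatteredness, the intersection number $\mathrm{intn}_{\sigma}(\Gamma)=\mathrm{intn}_{\overline{\sigma}}(\Gamma)=3$ to exclude the pseudoregulus and LP types via Theorems \ref{chPseudo} and \ref{charact3}, the preceding proposition for $L^3_{\delta}$, and Proposition \ref{trin0} together with a GAP check of the systems \eqref{trin} and \eqref{trin2} for $L^4_{\delta}$. This is essentially the same argument as in the paper (which leaves the corollary as an immediate consequence of the preceding results), with your only addition being the sensible remark that the computation must be organized as an elimination rather than a brute-force search over $\F_{q^6}^4$.
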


Recall that $\mathcal{L}$ is computationally proved to be scattered for $q\le29$, $q\equiv1\pmod4$.

We conclude this section proposing the following conjecture.

\begin{conjecture}
The linear set $\mathcal{L}$ is a new maximum scattered linear set of $\PG(1,q^6)$ for each $q$
such that $q \equiv 1 \pmod{4}$ and $q\not\equiv 0 \pmod{5}$.
\end{conjecture}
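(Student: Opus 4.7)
The plan is to split the conjecture into two independent pieces: \textbf{(S)} $\mathcal{L}$ is maximum scattered for every $q\equiv 1\pmod 4$; and \textbf{(NE)} $\mathcal{L}$ is not $\mathrm{P}\Gamma\mathrm{L}$-equivalent to any of $L^{1}$, $L^{2}_\delta$, $L^{3}_\delta$, $L^{4}_\delta$ for every such $q$ additionally satisfying $q\not\equiv 0\pmod 5$. Most of \textbf{(NE)} is already done in the excerpt: the calculation $\mathrm{intn}_\sigma(\Gamma)=\mathrm{intn}_{\overline\sigma}(\Gamma)=3$ combined with Theorems \ref{chPseudo} and \ref{charact3} excludes $L^{1}$ and every $L^{2}_\delta$, while the proposition preceding Proposition \ref{trin0} handles $L^{3}_\delta$. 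What remains is to exclude $L^{4}_\delta$ for arbitrary admissible $q$ and to establish \textbf{(S)} uniformly in $q$.

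For the missing part of \textbf{(NE)}, I would analyse systems \eqref{trin} and \eqref{trin2} by elementary elimination. In \eqref{trin}, the four equations giving $\pm d$ combine into $b^{q}+2b^{q^{3}}+\delta b^{q^{5}}=0$ and $\delta a^{q^{5}}=-2b^{q}$, together with explicit formulas for $a^{q}$ and $a^{q^{3}}$ in terms of $b$. The two Frobenius-consistency conditions $(a^{q})^{q^{2}}=a^{q^{3}}$ and $(a^{q})^{q^{4}}=a^{q^{5}}$, after using $\delta^{q^{2}}=\delta$ and $\delta^{2}=1-\delta$, reduce to
\[ (3-2\delta)b^{q}+(3-\delta)b^{q^{3}}=0,\qquad (1-5\delta)b^{q}-3\delta b^{q^{3}}=0. \]
The determinant of this $2\times 2$ Frobenius-linear system equals $6\delta-2$ modulo $\delta^{2}+\delta-1$, and vanishes together with $\delta^{2}+\delta=1$ only in characteristic $5$. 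Hence for $q\not\equiv 0\pmod 5$ one forces $b=0$, whence $d=c=a=0$, contradicting $ad-bc\neq 0$. The system \eqref{trin2} is treated identically, by symmetry of its coefficients.

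For \textbf{(S)} I would invoke the Dickson-matrix criterion: scatteredness of $\mathcal{L}$ is equivalent to $\mathrm{rk}\,M_{\lambda}\ge 5$ for every $\lambda\in\F_{q^{6}}$, where $M_{\lambda}$ is the $6\times 6$ Dickson matrix of
\[ f_\lambda(x):=\lambda x-x^{q}+x^{q^{2}}-x^{q^{4}}-x^{q^{5}}. \]
I would expand a well-chosen $5\times 5$ minor of $M_{\lambda}$ as a polynomial in $\lambda,\lambda^{q},\ldots,\lambda^{q^{5}}$ and exploit $\N(\lambda)\in\F_{q}$ to reduce it to a low-degree invariant. The $\pm 1$ coefficient pattern of $f_\lambda$ suggests that the critical factor is a sum of two squares, non-zero over $\F_{q^{6}}$ precisely when $-1$ is not a square in the relevant subfield, i.e.\ when $q\equiv 1\pmod 4$. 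A promising alternative is the algebraic-geometry strategy of Bartoli--Zhou: translate $f_\lambda(x)=0$ into a plane curve, bound its $\F_{q^{6}}$-rational points by Hasse--Weil to deduce \textbf{(S)} for all sufficiently large $q$, and rely on the existing computational check up to $q=29$ for the small remaining cases.

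The principal obstacle is \textbf{(S)}. The $5\times 5$ minors are finite explicit polynomials, but collapsing their many cross-terms into a single transparent invariant of $\lambda$ requires either a clever choice of minor and careful bookkeeping, or the curve-theoretic substitute sketched above. A useful sanity check comes from Proposition \ref{trin0}: at $q\equiv 0\pmod 5$ the set $\mathcal{L}$ coincides with $L^{4}_{2}$, which is itself scattered, so the proof of \textbf{(S)} must be insensitive to the congruence modulo $5$ and depend essentially only on the congruence modulo $4$, as predicted by the coefficient symmetry of $f_\lambda$.
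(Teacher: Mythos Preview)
The statement you address is labelled a \emph{Conjecture} in the paper and is not proved there: scatteredness is verified only for $q\le29$ by computer, and non-equivalence only for $q\le17$ via a GAP search on systems \eqref{trin} and \eqref{trin2}. There is no proof in the paper to compare against.

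Your part \textbf{(NE)} is a genuine advance. The elimination you sketch for \eqref{trin} is essentially correct: combining the equations for $\pm d$ with the Frobenius-consistency conditions on $a$ (using $\delta^{q^{2}}=\delta$) does collapse to two $\F_{q^{2}}$-linear relations between $b^{q}$ and $b^{q^{3}}$ whose determinant, reduced modulo $\delta^{2}+\delta-1$, vanishes only in characteristic~$5$. (Your value $6\delta-2$ and the alternative $2\delta-4$ one obtains from a slightly different pair of consistencies both have exactly this property, so minor coefficient discrepancies do not affect the conclusion.) Hence for $q\not\equiv0\pmod5$ one forces $b=0$, then $d=0$, $a=0$, $c=0$, contradicting $ad-bc\neq0$; system \eqref{trin2} yields to the same method. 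This would upgrade the non-equivalence in Corollary~\ref{new} to all admissible $q$.

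Part \textbf{(S)}, however, is a programme, not a proof. You name the Dickson-matrix and Hasse--Weil strategies but carry out neither: the assertion that ``the critical factor is a sum of two squares'' is a hope rather than a computation, and the curve-theoretic route would still require bounding the genus explicitly and closing the small-$q$ gap. Since \textbf{(S)} is precisely what the paper could not settle either (only $q\le29$ is checked), the conjecture remains open after your proposal. In summary, you have essentially established the ``new'' half for all admissible $q$, but the ``maximum scattered'' half still lacks the decisive idea.
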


\section{MRD-codes and scattered $\F_q$-subspaces}

The most natural way to look to the connection between maximum scattered linear sets and MRD-codes is through the $\F_q$-subspaces defining such linear sets, i.e. maximum scattered $\F_q$-subspaces.
We briefly recall some basic definitions and results on rank metric codes, that have been intensively investigated for their applications in cryptography, space-time coding and distributed storage and for their links with remarkable geometric and algebraic objects (see e.g. \cite{BartoliZhou,GPT,delaCruz,Lusina,NRS,Silb}).

In 1978, Delsarte \cite{Delsarte} introduced rank metric codes as follows.
The set of $m \times n$ matrices $\F_q^{m\times n}$ over $\F_q$ is a rank metric $\F_q$-space
with rank metric distance defined by
\[d(A,B) = \mathrm{rk}\,(A-B)\]
for $A,B \in \F_q^{m\times n}$.
A subset $\C \subseteq \F_q^{m\times n}$ is called a \emph{rank metric code} (or \emph{RM}-code for short).
The \emph{minimum distance} of $\C$ is
\[d = \min\{ d(A,B) \colon A,B \in \C,\,\, A\neq B \}.\]
We are interested in $\F_q$-\emph{linear} RM-codes, i.e. for which $\C$ is an $\F_q$-linear subspace of $\F_q^{m\times n}$.
We will say that such a code has parameters $(m,n,q;d)$.
In \cite{Delsarte}, Delsarte also showed that the parameters of these codes must fulfill a Singleton-like bound, i.e.
\[ |\C| \leq q^{\max\{m,n\}(\min\{m,n\}-d+1)}. \]
When the equality holds, we call $\C$ \emph{maximum rank distance} (\emph{MRD} for short) code.

From now on, we will only consider $\F_q$-linear RM-codes of $\F_q^{n\times n}$, i.e. those which can be identified with $\F_q$-subspaces of $\mathrm{End}_{\F_q}(\F_{q^n})$.
Since $\mathrm{End}_{\F_q}(\F_{q^n})$ is isomorphic to the ring of $q$-polynomials over $\F_{q^n}$  modulo $x^{q^n}-x$, denoted by $\cL_{n,q}$, with addition and composition as operations, we will consider $\cC$ as an $\F_q$-subspace of $\cL_{n,q}$.
Given two $\F_q$-linear RM-codes, $\cC_1$ and $\cC_2$, they are \emph{equivalent} if and only if there exist $\varphi_1$, $\varphi_2\in \cL_{n,q}$ permuting $\F_{q^n}$ and $\rho\in \mathrm{Aut}(\F_q)$ such that
\[ \varphi_1\circ f^\rho \circ \varphi_2 \in \cC_2 \text{ for all }f\in \cC_1,\]
where $\circ$ stands for the composition of maps and $f^\rho(x)= \sum f_i^\rho x^{q^i}$ for $f(x)=\sum f_i x^{q^i}$.
For a rank metric code $\cC$ given by a set of linearized polynomials, its \emph{left} and \emph{right idealisers} can be defined as:
\[L(\cC)= \{ \varphi \in \cL_{n,q}\colon \varphi \circ f \in \cC \text{ for all }f\in \cC \},\]
\[R(\cC)= \{ \varphi \in \cL_{n,q}\colon f \circ \varphi \in \cC \text{ for all }f\in \cC \}.\]
If $L(\C)$ has maximum cardinality $q^n$, then we may always assume (up to equivalence) that
\[L(\C)=\mathcal{F}_n=\{\tau_{\alpha}=\alpha x \colon \alpha \in \F_{q^n}\}\simeq \F_{q^n};\]
the same holds for the right idealiser, see \cite[Theorem 6.1]{CMPZ} and \cite[Theorem 2.2]{CsMPZh}.
Hence, when the left idealiser is $\mathcal{F}_n$, $\C$ results to be closed with respect to the left composition with the $\F_{q^n}$-linear maps; while if the right idealiser is $\mathcal{F}_n$, then $\C$ is closed with respect to the right composition with the $\F_{q^n}$-linear maps.
For this reason, when $L(\C)$ (resp. $R(\C)$) is equal to $\mathcal{F}_n$ we say that $\C$ is $\F_{q^n}$-\emph{linear on the left} (resp. \emph{right})\index{$\F_{q^n}$-linear on the left}\index{$\F_{q^n}$-linear on the right} (or simply $\F_{q^n}$-\emph{linear} if it is clear from the context).

The notion of Delsarte dual code can be written in terms of
$q$-polynomials as follows,
see for example \cite[Section 2]{LTZ}.
Let $b:\cL_{n,q}\times\cL_{n,q}\to\F_q$ be the bilinear form
given by
\[ b(f,g)=\mathrm{Tr}_{q^n/q}\left( \sum_{i=0}^{n-1} f_ig_i \right) \]
where $\displaystyle f(x)=\sum_{i=0}^{n-1} f_i x^{q^i}$ and $\displaystyle g(x)=\sum_{i=0}^{n-1} g_i x^{q^i}$, and $\mathrm{Tr}_{q^n/q}$ is the trace function $\F_{q^n}\to\F_q$.
The \emph{Delsarte dual code} $\C^\perp$ of a set of $q$-polynomials $\C$ is
\[\C^\perp = \{f \in \cL_{n,q} \colon b(f,g)=0, \hspace{0.1cm}\forall g \in \C\}. \]

Only few families of MRD-codes are known, due to the results in \cite{BartoliZhou,ByrneRavagnani,H-TNRR}.
In \cite{Delsarte}, Delsarte gives the first construction for linear MRD-codes (he calls such sets \emph{Singleton systems}) from the perspective of bilinear forms. Few years later, Gabidulin in \cite[Section 4]{Gabidulin} presents the same class of MRD-codes by using linearized polynomials.
Although these codes have been originally discovered by Delsarte, they are called \emph{Gabidulin codes}.
Kshevetskiy and Gabidulin in \cite{kshevetskiy_new_2005} generalize the previous construction obtaining the so-called \emph{generalized Gabidulin codes}
\[\mathcal{G}_{k,s}=\langle x,x^{q^s},\ldots,x^{q^{s(k-1)}} \rangle_{\F_{q^n}},\]
with $\gcd(s,n)=1$ and $k\leq n-1$.
The RM-code $\mathcal{G}_{k,s}$ is an $\F_{q}$-linear MRD-code with parameters $(n,n,q;n-k+1)$ and $L(\mathcal{G}_{k,s})=R(\mathcal{G}_{k,s})\simeq \F_{q^n}$, see \cite[Lemma 4.1 \& Theorem 4.5]{LN2016}.
Note that, as proved in \cite{Gabidulin,kshevetskiy_new_2005}, this family is closed with respect to the Delsarte duality, more precisely $\mathcal{G}_{k,s}^\perp$ is equivalent to $\mathcal{G}_{n-k,s}$.
This family of MRD-codes has been characterized by Horlemann-Trautmann and Marshall in \cite{H-TM} as follows.

\begin{theorem}\label{gabidulind}\cite[Theorem 4.8]{H-TM}
  An $\F_{q^n}$-linear MRD-code $\cC \subseteq \mathcal{L}_{n,q}$ having dimension $k$ (over $\F_{q^n}$) is equivalent
  to a generalized Gabidulin code $\cG_{k,s}$ if and only if there is
  an integer $s<n$ with $\gcd(s,n)=1$ and
  $\dim_{\F_{q^n}} (\cC\cap\cC^{[s]})=k-1$, where $\cC^{[s]}=\{f(x)^{q^s} \colon f \in \cC\}$.
\end{theorem}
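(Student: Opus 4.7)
The plan is to pass to the twisted polynomial ring $\cL_{n,q}\cong \F_{q^n}\langle\tau\rangle/(\tau^n-1)$, in which $\tau a=a^q\tau$, the operation $f\mapsto f^{[s]}$ becomes left multiplication by $\tau^s$, and $\cG_{k,s}=\F_{q^n}\{1,\tau^s,\ldots,\tau^{s(k-1)}\}$. The key structural fact is that, since $\gcd(s,n)=1$, an $\F_{q^n}$-subspace of $\cL_{n,q}$ that is fixed under left multiplication by $\tau^s$ is automatically a left ideal, hence of the form $I_U=\{f\colon f|_U=0\}$ for some $\F_q$-subspace $U\le\F_{q^n}$, with $\F_{q^n}$-dimension $n-\dim_{\F_q}U$. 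The forward direction then reduces to the direct computation $\cG_{k,s}\cap\tau^s\cG_{k,s}=\F_{q^n}\{\tau^s,\ldots,\tau^{s(k-1)}\}$, of $\F_{q^n}$-dimension $k-1$, together with a routine check that an admissible equivalence preserves the existence (though possibly not the value) of the witnessing shift.

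For the main direction I would first produce a $\tau^s$-cyclic generator of $\cC$. Setting $\cC_i=\bigcap_{j=0}^i\tau^{js}\cC$, the identity $\cC_{i+1}=\cC_i\cap\tau^s\cC_i$ forces $\dim_{\F_{q^n}}\cC_{i+1}$ to drop strictly below $\dim_{\F_{q^n}}\cC_i$ unless $\cC_i$ is $\tau^s$-invariant; in that degenerate case $\cC_i=I_U$ sits inside $\cC$, so every nonzero $f\in\cC_i$ satisfies $\mathrm{rk}(f)\le n-\dim U=\dim_{\F_{q^n}}\cC_i$, contradicting the MRD bound $\mathrm{rk}(f)\ge n-k+1$ whenever $\dim_{\F_{q^n}}\cC_i<n-k+1$. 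Starting from $\dim\cC_1=k-1$ and handling the range $k>n/2$ via Delsarte duality (using $(\cC^{[s]})^\perp=(\cC^\perp)^{[s]}$ and $\cG_{n-k,s}^\perp\equiv\cG_{k,s}$ to transfer the argument from $\cC^\perp$ of dimension at most $n/2$ back to $\cC$), I obtain $\dim\cC_i=k-i$ for all $0\le i\le k-1$. Picking $0\ne g\in\cC_{k-1}$ and setting $h=\tau^{-(k-1)s}g$, each of $h,\tau^sh,\ldots,\tau^{(k-1)s}h$ lies in $\cC$, and any nontrivial $\F_{q^n}$-linear dependence among them would present $h$ as a right annihilator of a nonzero $p\in\cG_{k,s}$; by a Moore-determinant computation (using $\gcd(s,n)=1$) such a $p$ has $\F_q$-kernel of dimension at most $k-1$, forcing $\mathrm{rk}(h)\le k-1$ and contradicting MRD. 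These $k$ elements thus form an $\F_{q^n}$-basis of $\cC$, giving $\cC=\cG_{k,s}\cdot h$.

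The main obstacle is then to upgrade the presentation $\cC=\cG_{k,s}\cdot h$ to an honest equivalence $\cC\equiv\cG_{k,s}$, which is immediate by right-composition with $h^{-1}$ as soon as $h$ is invertible in $\cL_{n,q}$. MRD alone gives only $\mathrm{rk}(h)\ge n-k+1$; a first sharpening using the intersection hypothesis improves this to $\mathrm{rk}(h)\ge k+1$, since otherwise the augmented Gabidulin code $\cG_{k+1,s}=\cG_{k,s}+\tau^s\cG_{k,s}$ would have a nonzero right annihilator of $h$ and $\dim(\cC\cap\tau^s\cC)$ would strictly exceed $k-1$. My plan is to iterate this rank-refinement through the entire chain $\dim\cC_i=k-i$, at each level extracting a further lower bound on $\mathrm{rk}(h)$ from the dimension of the annihilator of $\mathrm{im}(h)$ inside $\cG_{k+i,s}$, and then to combine this with the left-$\F_{q^n}$-linearity of $\cC$ to rule out any proper image of $h$ and conclude $\mathrm{rk}(h)=n$. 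This iterative rigidity step is where I expect the most delicate case analysis to lie; once invertibility of $h$ is secured, the equivalence $\cC\circ h^{-1}=\cG_{k,s}$ closes the proof.
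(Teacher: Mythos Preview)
This theorem is not proved in the present paper: it is quoted verbatim from \cite[Theorem~4.8]{H-TM} and used as a tool. There is therefore no ``paper's own proof'' to compare your proposal against.

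On the substance of your sketch, the forward direction is essentially fine once you note that, since both $\cC$ and $\cG_{k,s}$ have left idealiser $\cF_n$, any equivalence $\cC=\varphi_1\circ\cG_{k,s}^{\rho}\circ\varphi_2$ forces $\varphi_1$ to normalise $\cF_n$ and hence to be a monomial $\alpha\tau^j$; after that the computation $\dim(\cC\cap\cC^{[s]})=k-1$ is indeed immediate. Your chain argument giving $\dim\cC_i=k-i$ (for $k\le (n+1)/2$, else pass to $\cC^\perp$) and the construction of $h$ with $\cC=\cG_{k,s}\cdot h$ are also sound. The genuine gap is exactly the one you flag: invertibility of $h$. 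Your proposed ``iterative rank-refinement'' is too vague, and in fact unnecessary. A direct argument closes it: since $h=1\cdot h\in\cC$, the MRD hypothesis gives $\mathrm{rk}(h)\ge n-k+1$, and after the duality reduction one has $n-k+1\ge k$, so $\dim_{\F_q}\mathrm{im}\,h\ge k-1$. Choose a $(k-1)$-dimensional $\F_q$-subspace $V\subseteq\mathrm{im}\,h$; the $\F_{q^n}$-linear evaluation map $\cG_{k,s}\to\F_{q^n}^{\,k-1}$, $p\mapsto p|_V$, has nontrivial kernel, so some $0\neq p\in\cG_{k,s}$ vanishes on $V$. Then $\ker(ph)\supseteq h^{-1}(V)$ has $\F_q$-dimension $(k-1)+\dim\ker h$, whence $\mathrm{rk}(ph)\le(n-k+1)-\dim\ker h$. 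Since $ph\in\cC\setminus\{0\}$ this forces $\dim\ker h=0$, i.e.\ $h$ is invertible, and $\cC\circ h^{-1}=\cG_{k,s}$.
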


Very recently, Neri in \cite{Neri} removed the hypothesis on $\C$ to be an MRD-code.

Sheekey in \cite{Sh} proves that with $\gcd(s,n)=1$, the set
\[ \mathcal{H}_{k,s}(\eta,h)=\{a_0x+a_1x^{q^s}+\ldots+a_{k-1}x^{q^{s(k-1)}}+a_0^{q^h}\eta x^{q^{sk}} \colon a_i \in \F_{q^n}\}, \]
with $k\leq n-1$ and $\eta \in \F_{q^n}$ such that $\N_{q^n/q}(\eta)\neq (-1)^{nk}$, is an $\F_q$-linear MRD-code of dimension $nk$ with parameters $(n,n,q;n-k+1)$.
This code is called \emph{generalized twisted Gabidulin code}.
Lunardon, Trombetti and Zhou in \cite{LTZ}, generalizing the results of \cite{Sh}, determined the automorphism group of the generalized twisted Gabidulin codes and proved that, up to equivalence, the generalized Gabidulin codes and the twisted Gabidulin codes are both proper subsets of this class.
Clearly, for $\eta=0$ we have exactly the generalized Gabidulin code $\mathcal{G}_{k,s}$.
Also, the authors in \cite[Corollary 5.2]{LTZ} determined the left and right idealisers: if $\eta \neq 0$, then
\begin{equation}\label{leftrightidealH}
L(\mathcal{H}_{k,s}(\eta,h))\simeq\F_{q^{\gcd(n,h)}} \,\, \text{and} \,\, R(\mathcal{H}_{k,s}(\eta,h))\simeq\F_{q^{\gcd(n,sk-h)}}.
\end{equation}
The class of generalized twisted Gabidulin codes is closed with respect to the Delsarte duality, more precisely $\mathcal{H}_{k,s}(\eta,h)^\perp$ is equivalent to $\mathcal{H}_{n-k,s}(-\eta,n-h)$,
\cite[Theorem 6]{Sh} and \cite[Propositions 4.2]{LTZ}.
We are interested in the case when $h=0$, i.e.
\[  \mathcal{H}_{k,s}(\eta):=\mathcal{H}_{k,s}(\eta,0)=\langle x+\eta x^{q^{sk}}, x^{q^s},\ldots, x^{q^{s(k-1)}}\rangle_{\F_{q^n}}, \]
which is $\F_{q^n}$-linear (more precisely it is an $\F_{q}$-linear MRD-code $\F_{q^n}$-linear on the left).
This family has been characterized in \cite{GiuZ}.

\begin{theorem}\cite[Theorem 3.9]{GiuZ}\label{thm:charcGTG}
  Let $\cC \subseteq \mathcal{L}_{n,q}$ be an $\F_{q^n}$-linear MRD-code having dimension $k>2$.
  Then, the code $\cC$ is equivalent to a generalized twisted Gabidulin code if and only if there exists an integer $s$ such that $\gcd(s,n)=1$ and such that the following two conditions hold
\begin{enumerate}
  \item $\dim (\cC \cap \cC^{[s]})=k-2$ and $\dim(\cC\cap \cC^{[s]} \cap \cC^{[{2s}]})=k-3$, i.e.
        there exist $p(x),q(x) \in \cC$ such that
        \[ \cC= \langle p(x)^{q^s}, p(x)^{q^{2s}}, \ldots, p(x)^{q^{s(k-1)}} \rangle_{\F_{q^n}} \oplus \langle q(x) \rangle_{\F_{q^n}}; \]
  \item $p(x)$ is invertible and there exists $\eta \in \F_{q^n}^*$ such that $p(x)+\eta p(x)^{q^{sk}} \in \cC$.
\end{enumerate}
\end{theorem}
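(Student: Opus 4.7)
The plan is to prove necessity and sufficiency separately, both reducing to the normal form $\mathcal{H}_{k,s}(\eta)$ via right composition by the invertible linearized polynomial $p(x)$. The key identity is that for linearized polynomials the operation $[s]$ coincides with post-composition by the Frobenius $x^{q^s}$, so $(f \circ g)^{[s]} = f^{[s]} \circ g^{[s]}$; consequently, when $g = p$ is invertible, $p^{[sj]} \circ p^{-1} = x^{q^{sj}}$, collapsing the twisted Frobenius chain onto the standard monomial basis.

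For sufficiency, assume (1) and (2). Put $\C' := \C \circ p^{-1}$; since right composition by an invertible element of $\cL_{n,q}$ is a code equivalence preserving both the MRD property and $\F_{q^n}$-linearity on the left, $\C'$ is an equivalent $\F_{q^n}$-linear MRD-code of dimension $k$. The identity above yields
\[ p^{[sj]} \circ p^{-1} = x^{q^{sj}}, \quad j=1,\ldots,k-1, \qquad (p+\eta p^{[sk]})\circ p^{-1} = x+\eta x^{q^{sk}}, \]
so $\C'$ contains the $k$ polynomials $x+\eta x^{q^{sk}}, x^{q^s}, \ldots, x^{q^{s(k-1)}}$, which are $\F_{q^n}$-linearly independent in $\cL_{n,q}$ since $\gcd(s,n)=1$ and $k<n$. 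A dimension count forces $\C'=\mathcal{H}_{k,s}(\eta)$, and the MRD property pins down $\N_{q^n/q}(\eta)\neq(-1)^{nk}$.

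For necessity, one reduces to $\C=\mathcal{H}_{k,s}(\eta)$ by invariance of (1) and (2) under the relevant equivalence group (with $s$, $p$, $\eta$ allowed to transform accordingly). Taking $p(x)=x$, which is invertible, the chain $p^{[sj]}=x^{q^{sj}}\in\C$ for $j=1,\ldots,k-1$ together with $q(x)=x+\eta x^{q^{sk}}\in\C$ is an $\F_{q^n}$-basis of $\C$; this yields (1) in its decomposition form, and (2) is immediate since $p+\eta p^{[sk]}=q\in\C$. The dimension part of (1) follows from direct expansion:
\[ \C^{[s]} = \langle x^{q^s}+\eta^{q^s}x^{q^{s(k+1)}},\; x^{q^{2s}},\; \ldots,\; x^{q^{sk}} \rangle_{\F_{q^n}}, \]
and the monomials $x^{q^{sj}}$ being $\F_{q^n}$-linearly independent for $j=0,\ldots,n-1$ gives $\C\cap\C^{[s]}=\langle x^{q^{2s}},\ldots,x^{q^{s(k-1)}}\rangle_{\F_{q^n}}$ of dimension $k-2$, with $\dim(\C\cap\C^{[s]}\cap\C^{[2s]})=k-3$ by an analogous count.

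The main obstacle is the equivalence-invariance step in the necessity direction. Equivalences of $\F_{q^n}$-linear MRD-codes are generated by right composition with invertible elements of $\cL_{n,q}$, left multiplication by $\F_{q^n}$-scalars, and the Galois action on coefficients; one needs to verify that (1) and (2) transform consistently under each, possibly with reparametrization of $s$, $p$ and $\eta$. Right composition by $g$ replaces $p$ by $p\circ g^{-1}$ and leaves both the twist form and the intersection dimensions invariant, while Galois conjugation replaces $s$ by a Galois-conjugate index still coprime to $n$. Threading these transformations through the intersection calculation constitutes the routine but delicate technical core of the argument.
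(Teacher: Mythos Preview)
This theorem is quoted from \cite{GiuZ} and the present paper does not supply a proof of it; there is therefore no argument in the paper to compare your attempt against.

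That said, a few remarks on your write-up. First, the ``key identity'' you state is incorrect: for $f^{[s]}(x):=f(x)^{q^s}=\phi_s\circ f$ one has $(f\circ g)^{[s]}=f^{[s]}\circ g$, not $f^{[s]}\circ g^{[s]}$. Your subsequent computation $p^{[sj]}\circ p^{-1}=\phi_{sj}\circ p\circ p^{-1}=x^{q^{sj}}$ is nevertheless correct, so the sufficiency direction survives. Second, in the necessity direction your description of the equivalences preserving left $\F_{q^n}$-linearity is not quite right: the left factor $\varphi_1$ need only normalise $\mathcal{F}_n$, which forces it to be a monomial $\alpha x^{q^i}$, not merely a scalar $\alpha x$. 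This extra Frobenius twist has to be absorbed (either into a shift of the parameter $s$ or into the choice of $p$), and you have not carried this out. The invariance of the intersection dimensions $\dim(\cC\cap\cC^{[s]})$ etc.\ under such equivalences is exactly the point that requires care, and your sketch (``threading these transformations through the intersection calculation'') stops short of doing it.
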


Apart from the two infinite families of $\F_{q^n}$-linear MRD-codes (i.e. $\cG_{k,s}$ and $\cH_{k,s}(\eta)$), there are few other examples known for $n \in \{6,7,8\}$, which arise from the connection with scattered linear sets we are going to explain.

In \cite[Section 5]{Sh} Sheekey showed that scattered $\F_q$-subspaces of $\F_{q^n}\times\F_{q^n}$ of dimension $n$ yield $\F_q$-linear MRD-codes with parameters $(n,n,q;n-1)$ with left idealiser isomorphic to $\F_{q^n}$; see \cite{CSMPZ2016,CsMPZ2019,ShVdV} for further details on such kind of connections.
Let us recall the construction from \cite{Sh}. Let $U_f:=\{(x,f(x))\colon x\in \F_{q^n}\}$ be a scattered $\F_q$-subspace of $\F_{q^n}\times\F_{q^n}$.
The set
\[
\C_f:=\langle x,f(x)\rangle_{\F_{q^n}}
\]
corresponds to a set of $n\times n$ matrices over $\F_q$ forming an $\F_q$-linear MRD-code with parameters $(n,n,q;n-1)$. Also, since $\C_f$ is an $\F_{q^n}$-subspace of $\cL_{n,q}$, its left idealiser $L(\C_f)$ is isomorphic to $\F_{q^n}$.
For further details see \cite[Section 6]{CMPZ}.
Furthermore, let $\C_f$ and $\C_h$ be two MRD-codes arising from maximum scattered subspaces $U_f$ and $U_h$ of $\F_{q^n}\times \F_{q^n}$.
In \cite[Theorem 8]{Sh} the author showed that there exist invertible matrices $A$, $B$ and $\sigma \in \mathrm{Aut}(\F_{q})$ such that $A \C_f^\sigma B=\C_h$ if and only if $U_f$ and $U_h$ are $\Gamma\mathrm{L}(2,q^n)$-equivalent, i.e. he proved that the equivalence of the rank metric codes coincides with the $\Gamma\mathrm{L}$-equivalence of the corresponding subspaces.

As a consequence we get the following result.

\begin{theorem}
\label{thm:newMRD}
If $q\leq 17$, $q \equiv 1 \pmod{4}$ and $q\neq 5$, then the RM-code $\C=\langle x, x^q-x^{q^2}+x^{q^4}+x^{q^5} \rangle_{\F_{q^6}}$ is an $\F_q$-linear MRD-code with parameters $(6,6,q;5)$ and left idealiser isomorphic to $\F_{q^6}$,
and is not equivalent to any previously known MRD-code.
\end{theorem}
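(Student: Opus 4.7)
The plan is to assemble the statement from three ingredients already established in the paper: the fact that the relevant $\F_q$-subspace $\mathcal{U}=U_{x^q-x^{q^2}+x^{q^4}+x^{q^5}}$ is maximum scattered, Sheekey's dictionary between maximum scattered $\F_q$-subspaces of $\F_{q^n}\times\F_{q^n}$ and $\F_q$-linear MRD-codes with parameters $(n,n,q;n-1)$ and left idealiser $\F_{q^n}$, and Corollary \ref{new}, which rules out $\mathrm{P}\Gamma\mathrm{L}$-equivalence of $\mathcal{L}=L_{\mathcal{U}}$ with any linear set coming from the subspaces in Example \ref{exKnownscattered}.

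First I would verify the MRD and left idealiser parts. The subspace $\mathcal{U}$ has been shown computationally to be maximum scattered for $q\le29$, $q\equiv1\pmod4$, and in particular for all $q$ satisfying the hypotheses of the theorem. By \cite[Section 5]{Sh} (recalled in the preceding section of the paper, cf.\ also \cite[Section 6]{CMPZ}), the $\F_{q^6}$-span $\C=\langle x,\,x^q-x^{q^2}+x^{q^4}+x^{q^5}\rangle_{\F_{q^6}}\subset\cL_{6,q}$ is then an $\F_q$-linear MRD-code with parameters $(6,6,q;5)$. Because $\C$ is by definition closed under $\F_{q^6}$-scalar multiplication on the left, its left idealiser contains $\mathcal{F}_6\cong\F_{q^6}$, and therefore $L(\C)\cong\F_{q^6}$.

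The main work is the inequivalence part. By \cite[Theorem 8]{Sh}, two MRD-codes $\C_f$ and $\C_h$ arising from maximum scattered subspaces $U_f$, $U_h$ of $\F_{q^6}\times\F_{q^6}$ in Sheekey's way are equivalent as rank metric codes if and only if $U_f$ and $U_h$ are $\Gamma\mathrm{L}(2,q^6)$-equivalent. Moreover, every $\F_q$-linear MRD-code of these parameters with left idealiser isomorphic to $\F_{q^6}$ is of the form $\C_f$ for some maximum scattered $U_f$, so the list of \emph{candidate} MRD-codes to compare against is exactly the list of subspaces in Example \ref{exKnownscattered}. By Lemma \ref{equiv}, $\mathcal{U}$ being $\Gamma\mathrm{L}(2,q^6)$-equivalent to one of $U^1$, $U^2_\delta$, $U^3_\delta$, $U^4_\delta$ or to its adjoint is tantamount to $\mathcal{L}$ being $\mathrm{P}\Gamma\mathrm{L}$-equivalent to the corresponding linear set $L^1$, $L^2_\delta$, $L^3_\delta$, $L^4_\delta$. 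But Corollary \ref{new} asserts precisely that, under the arithmetic hypotheses $q\le17$, $q\equiv1\pmod4$, $q\neq5$, no such $\mathrm{P}\Gamma\mathrm{L}$-equivalence exists.

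The delicate point—really the only conceptual obstacle—is to make sure that $\mathrm{P}\Gamma\mathrm{L}$-equivalence of linear sets (the content of Corollary \ref{new}) transfers cleanly to $\Gamma\mathrm{L}(2,q^6)$-equivalence of the underlying $\F_q$-subspaces (the content needed for MRD-equivalence via \cite[Theorem 8]{Sh}). This transfer is handled uniformly by Lemma \ref{equiv}: for each $f$ in the list of Example \ref{exKnownscattered} it suffices to rule out equivalence to $U_f$ \emph{and} to its adjoint $U_{\hat f}$, and both possibilities are already subsumed under the $\mathrm{P}\Gamma\mathrm{L}$-equivalence statement of Corollary \ref{new}. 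Piecing these steps together yields the theorem.
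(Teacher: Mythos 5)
Your proposal is correct and follows essentially the same route as the paper: the paper's own proof likewise invokes the classification (from \cite[Section 6]{CMPZ}) of previously known MRD-codes with these parameters and left idealiser as exactly those arising from the subspaces of Example \ref{exKnownscattered}, and then concludes via Corollary \ref{new}. You merely make explicit the intermediate steps (Sheekey's correspondence, \cite[Theorem 8]{Sh}, and Lemma \ref{equiv}) that the paper leaves implicit.
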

\begin{proof}
From \cite[Section 6]{CMPZ}, the previously known $\F_q$-linear MRD-codes with parameters $(6,6,q;5)$ and with left idealiser isomorphic to $\F_{q^6}$ arise, up to equivalence, from one of the
maximum scattered subspaces of $\F_{q^{6}}\times\F_{q^{6}}$ described in Section \ref{EquivIssue}. From Corollary \ref{new} the result then follows.
\end{proof}

\subsection{Scattered linear sets and MRD-codes}

Lunardon in \cite[Section 3]{Lunardon2017} (see also \cite[Theorem 3.4]{ShVdV} and \cite[Section 4.1]{CsMPZ2019}) proved that if $U_f=\{(x,f(x)) \colon x \in \F_{q^n}\}$, with $f(x)=a_0x+a_1x^q+\ldots+a_{n-1}x^{q^{n-1}}$, is a scattered\footnote{The statement is more general, we have adapted it to our case.} $\F_q$-subspace of $\F_{q^n}\times\F_{q^n}$, then it can be obtained as a special quotient.
By \cite[Section 5]{Sh}, it follows that
\[ \C_f=\langle x,f(x) \rangle_{\F_{q^n}}, \]
is an MRD-code.
We may assume that the coefficient of $x$ in $f(x)$ is zero and $f(x)=x^{q^k}+\sum_{j\neq k} b_j x^{q^j}$.
Denoting with $\{i_1,\ldots,i_{n-2}\}=\{1,\ldots,k-1,k+1,\ldots,n-1\}$ and
\[ h_{i_j}(x)=x^{q^{i_j}}-b_{i_j}x^{q^k}, \,\,\, j=1,\ldots,n-2, \]
it is straightforward to see that
\[ \C^\perp=\langle h_{i_1}(x),\ldots,h_{i_{n-2}}(x) \rangle_{\F_{q^n}}. \]

We can embed $ U_{f} $ in $\F_{q^n}^n$ in such a way that the vector $(x,f(x))$ corresponds to the vector $(a_0,\ldots,a_{n-1})\in \F_{q^n}^n$ with $a_i=0$ if $i\neq 0,k$, $a_0=x$ and $a_k=f(x)$.
Note that $W=\langle U_{f} \rangle_{\F_{q^n}}$ corresponds to the $2$-dimensional subspace with equations $x_j=0$ where $j\neq 0,k$.

Let $V$ be the $\F_{q^n}$-subspace of $\F_{q^n}^n$ of dimension $n-2$ represented by the equations
\[ V \colon \left\{\begin{array}{ll} x_{0}=0 \\ x_k=-\sum_{j\neq 0,k} b_j x_j \end{array}\right.,  \]
and let $S=\{(x,x^q,\ldots,x^{q^{n-1}}) \colon x \in \F_{q^n}\}$.
Note that
\begin{equation}\label{eq:vertMRD}
V=c_{\mathcal{N}}(\C^\perp),
\end{equation}
where $c_{\mathcal{N}}(\alpha_0x+\ldots+\alpha_{n-1}x^{q^{n-1}})=(\alpha_0,\ldots,\alpha_{n-1})$.
It can be seen that $V \cap S= \{{\bf 0}\}$ and
\begin{equation}\label{eq:MRDvertex}
U_{f}= \langle V,S \rangle_{\F_q} \cap W.
\end{equation}

This link suggests a new proof of the equivalence between the assertions 1. and 2. of Theorem \ref{chPseudo}.
In the following we will assume that $L=L_f$ is a scattered linear set of $\PG(1,q^n)$ with rank $n$.

\begin{proof}(Theorem \ref{chPseudo})
Assume that $L_f$ is of pseudoregulus type, then by \cite{CSZ2015} we have that if $L_U=L_f$ then $U$ is $\Gamma\mathrm{L}(2,q^n)$-equivalent to 
\[ U_s=\{(x,x^{q^s}) \colon x \in \F_{q^n}\}.\,\,\,\text{with}\,\,\gcd(s,n)=1 \, \text{and} \, s<n/2. \]
Therefore if $U=U_s$, then $U_s= \langle V,S \rangle_{\F_q} \cap W$, with
\[ V\colon \left\{ \begin{array}{ll} x_0=0\\ x_s=0 \end{array}\right. \,\,\,\text{and}\,\,\, W\colon x_i=0\,\text{for}\,i\neq 0,s, \]
i.e. $L_f=p_{\Gamma,\Lambda}(\Sigma)$ with $\Gamma=\PG(V,\F_{q^n})=\PG(n-3,q^n)$, $\Sigma=\PG(S,\F_q)=\PG(n-1,q)$ and $\Lambda=\PG(W,\F_{q^n})=\PG(1,q^n)$.
Denote by $\sigma$ the collineation of $\PG(n-1,q^n)$ defined by
$\la(x_0,\ldots,x_{n-1})\ra_{\F_{q^n}}^{\sigma}=\la(x_{n-1}^{q},x_0^{q},\ldots,x_{n-2}^{q})\ra_{\F_{q^n}}$, which fixes precisely the points of $\Sigma$.
Therefore, we have that $\dim(\Gamma\cap\Gamma^{\sigma^s})=n-4$ and clearly $\sigma^s$ is a generator of the subgroup of $\mathrm{P}\Gamma\mathrm{L}(n,q^n)$ fixing $\Sigma$ pointwise.

Conversely, let $L=p_{\Gamma,\Lambda}(\Sigma)$ with $\dim(\Gamma\cap\Gamma^{\sigma^s})=n-4$, $\gcd(s,n)=1$, $\Gamma=\PG(V,\F_{q^n})=\PG(n-3,q^n)$, $\Sigma=\PG(S,\F_q)=\PG(n-1,q)$.
Note that $V=c_{\mathcal{N}}(\mathcal{C})$ with
\[ \cC=\langle g_1(x),\ldots,g_{n-2}(x) \rangle_{\F_{q^n}}, \]
for some linearized polynomials $g_1(x),\ldots,g_{n-2}(x)$.
It follows that
\[ V \colon \left\{ \begin{array}{ll} a_0x_0+\ldots+a_{n-1}x_{n-1}=0 \\ a_0'x_0+\ldots+a_{n-1}'x_{n-1}=0 \end{array} \right., \]
where $\C^\perp=\langle f_1(x), f_2(x)\rangle_{\F_{q^n}}$ and 
\[ f_1(x)=a_0x+\ldots+a_{n-1}x^{q^{n-1}}, \]
\[ f_2(x)=a_0'x+\ldots+a_{n-1}'x^{q^{n-1}}. \]
We may assume that $a_j=a_k'=1$ and $a_k=a_j'=0$ for some $j\neq k$, choose $W$ as the $\F_{q^n}$-subspace having equations $x_i=0$ for $i\neq j,k$.
Therefore, we have
\[ (V+S)\cap W\simeq U:=\{(f_1(x), f_2(x)) \colon x \in \F_{q^n}\}. \]
So, $L=L_U$ and $U$ results to be a scattered $\F_q$-subspace of $\F_{q^n}\times\F_{q^n}$, i.e. by \cite[Section 5]{Sh} $\cC^\perp$ is an MRD-code.
It follows that $\cC$ is an MRD-code with $\dim_{\F_{q^n}} \cC=n-2$ and $\dim_{\F_{q^n}} (\cC\cap\cC^{[s]})=n-3$.
By Theorem \ref{gabidulind}, $\C$ is equivalent to $\cG_{n-2,s}$. It follows that $U$ is $\Gamma\mathrm{L}(2,q^n)$-equivalent to $U_s$ and hence $L$ is of pseudoregulus type.
\end{proof}

In \cite{Neri}, Neri gives a characterization of generalized Gabidulin codes using the standard form of their generator matrix.
This suggests a further different approach to the characterization of linear sets of pseudoregulus type.

For linear sets of LP-type, as done for the pseudoregulus case, it follows that one of the possible $\F_q$-subspaces representing a linear set of LP-type can be obtained as in \eqref{eq:MRDvertex}, choosing $V$ in such a way that $V=c_\mathcal{N}(\mathcal{H}_{n-2,s}(\eta))$.
Since a characterization of generalized twisted Gabidulin codes is known, see Theorem \ref{thm:charcGTG} with $k=n-2$, it follows that a scattered linear set $L$ is of LP-type if and only if there exists an $\F_q$-subspace $U$ of $\F_{q^n}\times\F_{q^n}$ such that $L_U=L$, where $U$ is as in \eqref{eq:MRDvertex} and the rank-metric code associated to $V$ satisfies the hypothesis of Theorem \ref{thm:charcGTG} with $k=n-2$.
In contrast to the above characterization, those presented in the previous sections are purely geometric and take into account the problem of the possible $\F_q$-subspaces representing a linear set of LP-type.

Corrado Zanella and Ferdinando Zullo\\
Dipartimento di Tecnica e Gestione dei Sistemi Industriali\\
Universit\`a degli Studi di Padova\\
Stradella S. Nicola, 3\\
36100 Vicenza VI\\
Italy\\
\emph{\{corrado.zanella,ferdinando.zullo\}@unipd.it}

\noindent Ferdinando Zullo\\
Dipartimento di Matematica e Fisica,\\
Universit\`a degli Studi della Campania ``Luigi Vanvitelli'',\\
I--\,81100 Caserta, Italy\\
{{\em ferdinando.zullo@unicampania.it}}


\begin{thebibliography}{100}

\bibitem{BartoliZhou}
{\sc D. Bartoli and Y. Zhou:}
Exceptional scattered polynomials,
\emph{J. Algebra} {\bf 509} (2018), 507--534.

\bibitem{BL2000}
{\sc A. Blokhuis and M. Lavrauw:}
Scattered spaces with respect to a spread in $\mathrm{PG}(n,q)$,
\emph{Geom. Dedicata} {\bf 81} (2000), 231--243.

\bibitem{BoPol}
{\sc G. Bonoli and O. Polverino:}
$\mathbb{F}_q$-linear blocking sets in $\PG(2,q^4)$,
{\it Innov. Incidence Geom.} {\bf 2} (2005): 35--56.

\bibitem{ByrneRavagnani}
{sc Byrne E., Ravagnani, A.:}
Partition-balanced families of codes and asymptotic enumeration in coding theory,
\href{https://arxiv.org/abs/1805.02049}{https://arxiv.org/abs/1805.02049}.

\bibitem{CMP}
{\sc B. Csajb\'ok, G. Marino and O. Polverino:}
{Classes and equivalence of linear sets in $\mathrm{PG}(1,q^n)$},
\emph{J. Combin. Theory Ser. A} {\bf 157} (2018), 402--426.

\bibitem{CsMP2018}
{\sc B. Csajb\'ok, G. Marino and O. Polverino:}
{A Carlitz type result for linearized polynomials},
\emph{Ars Math. Contemp.} {\bf 16(2)}(2019), 585--608.

\bibitem{CMPZ}
{\sc B. Csajb\'ok, G. Marino, O. Polverino and C. Zanella:}
A new family of MRD-codes,
\emph{Linear Algebra Appl.} {\bf 548} (2018), 203--220.

\bibitem{CsMPZh}
{\sc B. Csajb\'ok, G. Marino, O. Polverino and Y. Zhou:}
Maximum Rank-Distance codes with maximum left and right idealisers,
\href{https://arxiv.org/abs/1807.08774}{https://arxiv.org/abs/1807.08774}.

\bibitem{CSMPZ2016}
{\sc B. Csajb\'ok, G. Marino, O. Polverino and F. Zullo:}
Maximum scattered linear sets and MRD-codes,
{\it J.\ Algebraic\ Combin.} {\bf 46} (2017), 1--15.

\bibitem{CsMPZ2019}
{\sc B. Csajb\'ok, G. Marino, O. Polverino and F. Zullo:}
A special class of scattered subspaces,
\href{https://arxiv.org/abs/1906.10590}{arXiv:1906.10590}.

\bibitem{CsMZ2018}
{\sc B. Csajb\'ok, G. Marino and F. Zullo:}
New maximum scattered linear sets of the projective line,
\emph{Finite Fields Appl.} {\bf 54} (2018), 133--150.

\bibitem{CSZ2015}
{\sc B. Csajb\'ok and C. Zanella:}
On the equivalence of linear sets,
\emph{Des. Codes Cryptogr.} {\bf 81} (2016), 269--281.

\bibitem{CsZ20162}
{\sc B. Csajb\'ok and C. Zanella:}
On scattered linear sets of pseudoregulus type in $\PG(1,q^t)$,
{\it Finite Fields Appl.} {\bf 41} (2016), 34--54.

\bibitem{CsZ2018}
{\sc B. Csajb\'ok and C. Zanella:}
Maximum scattered $\F_q$-linear sets of $\PG(1,q^4)$,
\emph{Discrete Math.} {\bf 341} (2018), 74–-80.

\bibitem{delaCruz}
{J. de la Cruz, M. Kiermaier, A. Wassermann and W. Willems:}
Algebraic structures of MRD Codes,
{\it Adv. Math. Commun.} {\bf 10} (2016), 499--510.

\bibitem{Delsarte}
{\sc P. Delsarte:}
Bilinear forms over a finite field, with applications to coding theory,
{\it J.\ Combin.\ Theory Ser.\ A} {\bf 25} (1978), 226--241.

\bibitem{Gabidulin}
{\sc E. Gabidulin:}
Theory of codes with maximum rank distance,
\emph{Problems of information transmission}, {\bf 21(3)} (1985), 3--16.

\bibitem{GPT}
{\sc E. M. Gabidulin, A. Paramonov and O. Tretjakov:}
Ideals over a Non-Commutative Ring and Their Application in Cryptology,
in \emph{Workshop on the Theory and Application of of Cryptographic Techniques}, Springer (1991), 482-–489.

\bibitem{GiuZ}
{\sc L. Giuzzi and F. Zullo:}
Identifiers for MRD-codes,
\emph{Linear Algebra Appl.} \textbf{575} (2019), 66--86.

\bibitem{H-TM}
{\sc A. Horlemann-Trautmann and K. Marshall:}
New criteria for MRD and Gabidulin codes and some rank-metric code constructions,
{\it Adv. Math. Commun.} {\bf 11(3)} (2017), 533--548.

\bibitem{kshevetskiy_new_2005}
{\sc A.~Kshevetskiy and E.~Gabidulin:}
The new construction of rank codes,
\emph{International {Symposium} on {Information} {Theory}}, 2005. {ISIT} 2005. {Proceedings}, pages 2105--2108, Sept. 2005.

\bibitem{LN2016}
{\sc D. Liebhold and G. Nebe}:
Automorphism groups of Gabidulin-like codes,
{\it Arch. Math.} {\bf107(4)} (2016), 355--366.

\bibitem{Lun99}
{\sc G. Lunardon:}
Normal spreads,
{\it Geom. Dedicata} {\bf 75} (1999), 245--261.

\bibitem{Lunardon2017}
{\sc G. Lunardon:}
MRD-codes and linear sets,
{\it J. Combin. Theory Ser. A} {\bf 149} (2017), 1--20.

\bibitem{LuMaPoTr2014}
{\sc G. Lunardon, G. Marino, O. Polverino and R. Trombetti:}
Maximum scattered linear sets of pseudoregulus type and the Segre Variety ${\cal S}_{n,n}$,
\emph{J.\ Algebraic.\ Combin.} \textbf{39} (2014), 807--831.

\bibitem{LuPoPo2002}
{\sc G. Lunardon, P. Polito and O. Polverino:}
A geometric characterisation of linear k-blocking sets,
{\it J. Geom.} {\bf 74 (1-2)} (2002), 120--122.

\bibitem{LP2001}
{\sc G. Lunardon and O. Polverino:}
Blocking Sets and Derivable Partial Spreads,
\emph{J. Algebraic Combin.} {\bf 14} (2001), 49--56.

\bibitem{LuPo2004}
{\sc G. Lunardon and O. Polverino:}
Translation ovoids of orthogonal polar spaces,
{\it Forum Math.} {\bf 16} (2004), 663--669.

\bibitem{LTZ}
{\sc G. Lunardon, R. Trombetti and Y. Zhou:}
Generalized Twisted Gabidulin Codes,
\emph{J. Combin. Theory Ser. A} {\bf 159} (2018), 79--106.

\bibitem{Lusina}
{\sc  P. Lusina, E. M. Gabidulin and M. Bossert:}
Maximum rank distance codes as space-time codes,
\emph{IEEE Trans. Info. Theory} {\bf 49} (2003), 2757-–2760.

\bibitem{MMZ}
{\sc G. Marino, M. Montanucci and F. Zullo:}
MRD-codes arising from the trinomial $x^q+x^{q^3}+cx^{q^5}\in\F_{q^6}[x]$,
\href{https://arxiv.org/abs/1907.08122}{arXiv:1907.08122}.

\bibitem{Neri}
{\sc A. Neri:}
Systematic encoders for generalized Gabidulin codes and the $q$-analogue of Cauchy matrices,
\href{https://arxiv.org/abs/1805.06706}{https://arxiv.org/abs/1805.06706}.

\bibitem{H-TNRR}
{\sc A. Neri, A. Horlemann-Trautmann, T. Randrianarisoa and J. Rosenthal:}
On the genericity of maximum rank distance and Gabidulin codes,
{\it Des. Codes Cryptogr.} {\bf 86(2)} (2018), 1--23.

\bibitem{NPH}
{\sc A. Neri, S. Puchinger, A. Horlemann-Trautmann:}
Invariants and Inequivalence of Linear Rank-Metric Codes,
\href{https://arxiv.org/abs/1905.11326}{arXiv:1905.11326}.

\bibitem{NPH2}
{\sc A. Neri, S. Puchinger, A. Horlemann-Trautmann:}
Equivalence and Characterizations of Linear Rank-Metric Codes Based on Invariants,
\href{https://arxiv.org/abs/1911.13059}{arXiv:1911.13059}.

\bibitem{NRS}
{\sc A. Neri, J. Rosenthal and D. Schipani:}
Fuzzy Authentication using Rank Distance,
\href{https://arxiv.org/abs/1703.03235}{arXiv:1703.03235}.

\bibitem{Sh}
{\sc J. Sheekey:}
A new family of linear maximum rank distance codes,
{\it Adv. Math. Commun.} {\bf 10(3)} (2016), 475--488.

\bibitem{ShVdV}
{\sc J. Sheekey and G. Van de Voorde:}
Rank-metric codes, linear sets and their duality,
\href{https://arxiv.org/abs/1806.05929}{arXiv:1806.05929}.

\bibitem{Silb}
{\sc N. Silberstein, A.S. Rawat, O.O. Koyluoglu and S. Vishwanath:}
Optimal locally repairable codes via rank-metric codes,
\emph{IEEE Int. Symp. Inf. Theory (ISIT)} (2013), Istanbul (Turkey).

\bibitem{PhDthesis}
{\sc F. Zullo:}
Linear codes and Galois geometries: between two worlds,
\emph{Ph.D.\ thesis}, Universit\`a degli Studi della Campania ``\emph{Luigi Vanvitelli}''.

\end{thebibliography}
\end{document}